\documentclass[11pt,reqno]{article}

\usepackage[text={153mm,235mm},centering]{geometry}                
\usepackage{amssymb}
\usepackage{amsmath,amsthm}

\usepackage[all]{xy}
\usepackage{authblk}
\usepackage[hidelinks]{hyperref}

\newtheorem{theorem}{Theorem}[section]
\newtheorem{lemma}[theorem]{Lemma}

\newtheorem{corollary}[theorem]{Corollary}
\newtheorem{proposition}[theorem]{Proposition}
\newtheorem{question}[theorem]{Question}

\theoremstyle{definition}
\newtheorem{example}[theorem]{Example}
\newtheorem{definition}[theorem]{Definition}
\newtheorem*{ack}{Acknowledgements}

\newtheorem{remark}[theorem]{Remark}

\allowdisplaybreaks

\begin{document}

\title{Diagonalizable modular derivations on bigraded Poisson spaces
and Koszul duality}

\author[1]{Ruobing Chen\thanks{Email: astrocola@163.com}}

\author[2]{Di Zhang\thanks{Email: zhangdi5727@hdu.edu.cn}}

\renewcommand\Affilfont{\small}

\affil[1]{School of Mathematics, Sichuan University, Chengdu 610064, P.R. China}

\affil[2]{School of Mathematical Sciences, Hangzhou Dianzi University, Hangzhou 310018, China}

\date{}

\maketitle

\begin{abstract}
For two bigraded spaces $k^{m||n}$ and $k^{n\wedge m}$ equipped
with appropriate quadratic Poisson structures,
Koszul duality identifies the mixed Poisson differential
calculi and, in the unimodular case, their Poisson cohomologies as
Batalin--Vilkovisky algebras. In the current paper we prove that Koszul duality induces an isomorphism of Batalin--Vilkovisky
algebras between their Poisson cohomologies when the modular
derivation of one algebra is diagonalizable, including
non-unimodular cases.
\end{abstract}

\setcounter{tocdepth}{2}
{\small\linespread{1.0}\selectfont\tableofcontents}


\section{Introduction}\label{sect:intro}

For an ordinary superspace $k^{m|n}$, where $k$
is a field of characteristic zero, the even coordinates commute
with the odd coordinates.
The coordinate ring $\mathcal O(k^{m|n})$ is 
a Koszul algebra, 
 whereas the duals of an even
and an odd coordinate anticommute. Consequently, exchanging the
even and odd dimensions does not produce the required
dual function algebra; in other words, the Koszul
dual algebra of 
$\mathcal O(k^{m|n})$
is not $\mathcal O(k^{n|m})$. Following Chen and Yu \cite{CY},
they realize both algebras as functions on \emph{bigraded} spaces:
$$
\begin{aligned}
A&=\mathcal O(k^{m||n})
 =\operatorname{Sym}_k\{x_1,\ldots,x_m,y_1,\ldots,y_n\},\\
A^!&=\mathcal O(k^{n\wedge m})
 =\operatorname{Sym}_k\{\xi_1,\ldots,\xi_m,\eta_1,\ldots,\eta_n\}.
\end{aligned}
$$
Here the symmetric algebras are taken with respect to the bigraded sign rule, with coordinate degrees
$$
|x_i|=(0,0),\qquad |y_j|=(1,0),\qquad
|\xi_i|=(0,-1),\qquad |\eta_j|=(-1,-1).
$$
Writing $|a|=(p(a),w(a))$, with parity degree $p(a)$
and weight $w(a)$, multiplication of homogeneous elements satisfies
$ab=(-1)^{p(a)p(b)+w(a)w(b)}ba$.
Thus the $x_i$ are polynomial variables and the $y_j$ are
exterior variables, whereas the $\xi_i$ are exterior
variables and the $\eta_j$ are polynomial variables.
On the dual side, $\xi_i\eta_j=-\eta_j\xi_i$.
The second grading therefore records information essential
to the commutation relations. Section~\ref{subsect:bigraded-pair} lists the commutation
relations for both algebras.

In \cite[Theorem 2.6]{CY}, the first author together with Yu
identifies $A$ and $A^!$ as a
Koszul-dual pair. For corresponding quadratic Poisson
bivectors $\pi$ and $\pi^!$ of degree $(0,-2)$, we then
applied this result to study Poisson structures
on these two spaces. If the two Poisson structures
are dual to each other, then their Poisson
cohomology and \emph{mixed} Poisson homology are isomorphic 
as \emph{differential
calculi};
see \cite[Theorem 5.5]{CY}. More precisely, the isomorphisms
of Poisson cohomologies and homologies from 
$\mathcal O(k^{m||n})$ and $\mathcal O(k^{n\wedge m})$
 simultaneously preserve
the wedge product and Schouten bracket on cohomology,
the contraction action of cohomology on homology, and the mixed
differential on homology.

In \emph{loc. cit.}, we also studied the Poincar\'e duality
for the above two spaces.
For these bigraded spaces,
a closed mixed volume form which is a Poisson cycle
then identifies Poisson cohomology with mixed Poisson
homology by contraction. In this case, such a Poisson
structure is called \emph{unimodular}.
Transporting the mixed differential on the latter through this duality
gives the so-called Batalin-Vilkovisky (BV) operator.
That paper then proved that the standard
mixed volume forms on the above two bigraded spaces
correspond under Koszul duality,
that unimodularity is preserved, and that the resulting
BV algebras are isomorphic; see
\cite[Proposition~5.6 and Theorem~5.9]{CY}.

These results lead to the following natural question:

\begin{question}
Does the bigraded BV algebra structure persist when the
Poisson structure is non-unimodular?
\end{question}

The answer is affirmative when the modular derivation acting on the function algebra is diagonalizable over the ground field. Under this hypothesis, Poisson cohomology carries a BV algebra structure. For an admissible quadratic Koszul dual pair, the Koszul correspondence identifies the resulting BV algebras.

To prove these statements, we combine twisted Poincar\'e duality
with a zero-eigenvalue construction. Twisted duality was studied
by Launois and Richard for a class of quadratic Poisson algebras
\cite{launois2007}, and by Luo, Wang and Wu for polynomial
Poisson algebras \cite{luo2015}. 
With our conventions, a closed mixed volume form $\gamma$ determines the modular vector field $\upsilon=-\operatorname{Div}_\gamma(\pi)$ and the twisted Poisson boundary $\partial_\upsilon=\partial_\pi-\tilde\iota_\upsilon$.
Contraction with $\gamma$ identifies Poisson cohomology
with twisted Poisson homology. The identity
$\partial_\upsilon\tilde d+\tilde d\partial_\upsilon=-L_\upsilon$
shows that $\tilde d$ anticommutes with $\partial_\upsilon$
on the zero-eigenvalue subcomplex of $L_\upsilon$.
Under diagonalizability, this subcomplex computes the full twisted Poisson homology. Its mixed differential therefore defines an operator $B$ on that homology through the inclusion-induced isomorphism.

Our construction of the BV operator does not require the
Poisson structure to be quadratic. It applies to the
function algebra of any finite-dimensional bigraded space
with a Poisson bivector of degree $(0,-2)$ and a
diagonalizable modular derivation.

\begin{theorem}\label{mainthm1.1}
Let $\Sigma$ be a finite-dimensional bigraded vector
space over $k$, and let
$A=\mathcal{O}(\Sigma)$ be equipped with a Poisson
bivector $\pi$ of degree $(0,-2)$.
Let $\gamma$ be the standard mixed volume form on $A$.
If the modular derivation determined by $\gamma$ is diagonalizable over $k$,
then $\mathrm{HP}^{\bullet,\bullet}(A)$ admits a
canonical Batalin--Vilkovisky algebra structure whose operator
has cohomological degree $(0,-1)$ and generates
the induced Schouten--Nijenhuis bracket. 
\end{theorem}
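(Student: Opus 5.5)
The plan follows the outline in the introduction: twisted Poincaré duality, followed by restriction to the zero-eigenvalue part of $L_\upsilon$. Throughout, $\upsilon=-\operatorname{Div}_\gamma(\pi)$ is the modular vector field and $\partial_\upsilon=\partial_\pi-\tilde\iota_\upsilon$ is the twisted Poisson boundary on mixed forms. The argument has five steps.

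\emph{Step 1 (twisted duality).} Contraction with $\gamma$ gives a map $P\mapsto\iota_P\gamma$ from polyvector fields to mixed forms. It is a linear isomorphism because $\gamma$ is a mixed volume form. It intertwines the Poisson differential $d_\pi=[\pi,-]$ with $\partial_\upsilon$, up to sign, by the identity $\partial_\pi\iota_P\gamma=\pm\iota_{[\pi,P]}\gamma\pm\iota_{\upsilon\wedge P}\gamma$. This identity is the bigraded divergence formula defining $\operatorname{Div}_\gamma$. Hence $\mathrm{HP}^{\bullet,\bullet}(A)\cong H(\Omega,\partial_\upsilon)$.

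\emph{Step 2 (zero-eigenvalue subcomplex).} I check that $L_\upsilon=[\tilde d,\tilde\iota_\upsilon]$ commutes with both $\partial_\pi$ and $\tilde\iota_\upsilon$. This uses $[\pi,\upsilon]=0$, i.e.\ that $\upsilon$ is a Poisson vector field, and $[\upsilon,\upsilon]=0$. Diagonalizability of the modular derivation on $A$ extends to diagonalizability of $L_\upsilon$ on each finite-dimensional weight piece of mixed forms, since $L_\upsilon$ acts as a derivation. Because $\upsilon$ has degree $(0,0)$, each bigraded component of $\Omega$ decomposes into eigenspaces. These are preserved by $\partial_\upsilon$ and $\tilde d$.

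\emph{Step 3 (only the zero eigenspace contributes).} On the $\lambda$-eigenspace with $\lambda\neq 0$, the identity $\partial_\upsilon\tilde d+\tilde d\partial_\upsilon=-L_\upsilon=-\lambda$ exhibits $-\lambda^{-1}\tilde d$ as a contracting homotopy, so that eigenspace is acyclic. Thus the inclusion $\Omega_0\hookrightarrow\Omega$ of the zero eigenspace is a quasi-isomorphism. On $\Omega_0$, $\tilde d$ anticommutes with $\partial_\upsilon$, so $\tilde d$ descends to an operator $B$ on $H(\Omega,\partial_\upsilon)$. Transport $B$ to $\mathrm{HP}$ via Step 1 to get $\Delta$ of degree $(0,-1)$, so $\Delta^2=0$. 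The construction is canonical: the eigen-decomposition is intrinsic, and $\gamma$ is standard.

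\emph{Step 4 (BV generation — main obstacle).} It remains to show
\[
\Delta(P\wedge Q)-\Delta P\wedge Q\mp P\wedge\Delta Q=\pm[P,Q]
\]
on cohomology. Define $\Delta_\gamma$ on polyvectors at chain level by $\iota_{\Delta_\gamma P}\gamma=\tilde d\,\iota_P\gamma$. This is the usual divergence-type operator; it is a second-order operator generating the Schouten bracket, by the standard Koszul formula adapted to bigraded signs. I would verify that formula using the local expression of $\Delta_\gamma$ as $\sum\pm\partial_{\theta}\partial_{\text{coord}}$ in the dual coordinates.

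The point is then to pass to cohomology. The zero-eigenspace $\mathfrak X_0$ of $[\upsilon,-]$ on polyvectors is closed under $\wedge$ and the bracket, since $[\upsilon,-]$ is a derivation of both. The inclusion $\mathfrak X_0\hookrightarrow\mathfrak X$ is a quasi-isomorphism for $d_\pi$, by the same homotopy argument transported through Step 1; this uses $\iota_{[\upsilon,P]}\gamma=\pm L_\upsilon\iota_P\gamma$ when $\operatorname{Div}_\gamma\upsilon=0$. I also need $\operatorname{Div}_\gamma(\upsilon)=0$, which should follow from $\upsilon=-\operatorname{Div}_\gamma\pi$ and $\tilde d^2=0$.

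On $\mathfrak X_0$, $\Delta_\gamma$ anticommutes with $d_\pi$ (mirror of Step 3). So the chain-level BV identity on $\mathfrak X_0$ descends to $\mathrm{HP}$. The delicate part is the bookkeeping of bigraded signs in the generating identity and in the commutation $L_\upsilon$ versus $[\upsilon,-]$. I would handle this by fixing the sign convention of \cite{CY} for $\tilde\iota$ and $\tilde d$ and checking the identity on generators $x_i,y_j$ and their dual vector fields. The identity then extends by the derivation/second-order property.
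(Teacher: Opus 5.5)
Your Steps 1--3 are the paper's argument: Theorem~\ref{horiso}, the identity $\partial_\upsilon\tilde d+\tilde d\partial_\upsilon=-L_\upsilon$, and the homotopy $-\lambda^{-1}\tilde d$ of Theorem~\ref{diagonalizable}. Your Step~4 reaches the BV identity by a different route.

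\textbf{How the routes differ.} The paper first shows that $(\mathrm{HP}^{\bullet,\bullet}(A),\widetilde{\mathrm{HP}}_{\bullet,\bullet}(A;\upsilon),\wedge,\tilde\iota,[-,-],B)$ is a differential calculus with duality (Corollary~\ref{diffcaldual}). It then applies the bigraded Lambre theorem (Theorem~\ref{thm:bv-duality}). There the generating identity comes from the Cartan identity applied to $[\gamma]$ in homology, and canonicity comes from $\mathrm{PD}\mapsto c\,\mathrm{PD}$ under $\gamma\mapsto c\gamma$.

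You instead observe that $(\mathfrak X_0,\wedge,[-,-],[\pi,-],\mathrm{Div}_\gamma)$ is already a strict dg BV algebra. Its inclusion into $(\mathfrak X^\bullet(A),[\pi,-])$ is a quasi-isomorphism of dg Gerstenhaber algebras, so the identity descends. The paper records this description only afterwards, in Remark~\ref{rem:modular-class-bv}, where $\Delta=-\mathrm{Div}_\gamma$ on $\mathfrak X_0$.

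\textbf{What each route buys.} Yours is a stronger chain-level statement. It needs neither the contraction action of $\mathrm{HP}$ on twisted homology nor the abstract calculus-with-duality theorem, and canonicity is immediate because $\mathrm{Div}_{c\gamma}=\mathrm{Div}_\gamma$. The paper's route is chosen because the Koszul maps $(\Psi,\Phi)$ visibly preserve every ingredient of a calculus with duality. That makes Theorems~\ref{isodiff} and~\ref{mainthm} formal.

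\textbf{Two small points on Step~4.}
\begin{itemize}
\item The sign bookkeeping you flag as delicate needs no coordinates. Apply $[[\tilde\iota_P,\tilde d],\tilde\iota_Q]=\tilde\iota_{[P,Q]}$ to $\gamma$, and use $\tilde d\gamma=0$, $\tilde\iota_{P\wedge Q}=\tilde\iota_P\tilde\iota_Q$ and injectivity of $D_\gamma$. This gives the chain-level generating identity for $\mathrm{Div}_\gamma$ on all polyvectors at once.
\item You should state explicitly that $\mathrm{Div}_\gamma$ preserves $\mathfrak X_0$, which holds because $L_\upsilon$ commutes with $\tilde d$. You should also state that the descended operator agrees with the transport of $B$ from Step~3, up to the sign convention.
\end{itemize}

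\textbf{The under-justified step.} The diagonalizability claim in Step~2 does not follow just from $L_\upsilon$ (or $[\upsilon,-]$) being a derivation. First, bigraded components are infinite-dimensional: for $k^{m||n}$, all of $k[x_1,\ldots,x_m]$ lies in degree $(0,0)$. Second, the theorem allows inhomogeneous $\pi$, so $\upsilon$ need not be linear. Then $[\upsilon,\partial_{z_i}]$ can have non-constant coefficients, and the span of the module generators need not be invariant.

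The paper handles this as follows.
\begin{itemize}
\item Choose a finite-dimensional $\overline\upsilon$-stable generating subspace $W\subset A$ with eigenbasis $e_1,\ldots,e_N$.
\item Embed $\operatorname{Der}_k(A)$ equivariantly into $\operatorname{Hom}_k(W,A)\cong A^{\oplus N}$. There the action is $\bigoplus_i(\overline\upsilon-\lambda_i)$, which is diagonalizable.
\item Pass to invariant subspaces, then to each $\mathfrak X^r(A)$ by tensor products and quotients.
\item Transport to mixed forms via $L_\upsilon D_\gamma=D_\gamma[\upsilon,-]$.
\end{itemize}
Since your Step~4 works on polyvectors anyway, you only need the polyvector half of this argument.
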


The proof combines the zero-eigenvalue homotopy equivalence of Theorem~\ref{diagonalizable} with twisted Poincar\'e duality. Corollary~\ref{diffcaldual} supplies a differential calculus with duality, from which the BV operator is obtained as $\Delta=-\mathrm{PD}^{-1}\circ B\circ\mathrm{PD}$.

For the quadratic Koszul dual pair, Proposition~\ref{modulardual} identifies the modular vector fields and shows that diagonalizability is preserved. The correspondence intertwines the twisted Poisson boundaries and the zero-eigenvalue constructions, and sends the chosen mixed volume form of $A$ to that of $A^!$ (Theorem~\ref{isodiff}). These compatibilities give the following extension of the unimodular BV isomorphism.

\begin{theorem}\label{mainthm}
Let $A=(\mathcal{O}(k^{m||n}),\pi)$ be an
admissible quadratic Poisson algebra,
and let
$A^!=(\mathcal{O}(k^{n\wedge m}),\pi^!)$
be its Koszul dual Poisson algebra.
Equip $A$ and $A^!$ with the mixed volume forms
$$
\begin{aligned}
\gamma=dx_1\wedge\cdots\wedge dx_m
 \wedge y_1^*\wedge\cdots\wedge y_n^*,\quad
\gamma^!=\xi_1^*\wedge\cdots\wedge\xi_m^*
 \wedge d\eta_1\wedge\cdots\wedge d\eta_n.
\end{aligned}
$$
Assume that the modular derivation of $A$ determined by $\gamma$ is diagonalizable over $k$.
Then the Koszul correspondence induces an isomorphism
$$
    \mathrm{HP}^{\bullet,\bullet}(A)
    \xrightarrow{\;\sim\;}
    \mathrm{HP}^{\bullet,\bullet}(A^!)
$$
of Batalin--Vilkovisky algebras, with the BV operators
determined by $\gamma$ and $\gamma^!$.
\end{theorem}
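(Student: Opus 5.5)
The plan is to assemble the isomorphism from compatibilities already established, and then check that it intertwines the BV operators $\Delta=-\mathrm{PD}^{-1}\circ B\circ\mathrm{PD}$ on both sides.

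First, by \cite[Theorem~5.5]{CY} the Koszul correspondence gives an isomorphism $\Phi\colon \mathrm{HP}^{\bullet,\bullet}(A)\to\mathrm{HP}^{\bullet,\bullet}(A^!)$ of Gerstenhaber algebras. It preserves the wedge product and the Schouten bracket, and it is compatible with contraction on the homology side. Next, Proposition~\ref{modulardual} shows that the modular vector field $\upsilon$ of $A$ determined by $\gamma$ corresponds to the modular vector field $\upsilon^!$ of $A^!$ determined by $\gamma^!$, and that diagonalizability transfers. Hence Theorem~\ref{mainthm1.1} applies to both algebras and produces BV operators $\Delta$ and $\Delta^!$. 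It therefore remains to prove $\Phi\circ\Delta=\Delta^!\circ\Phi$.

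For this I would use Theorem~\ref{isodiff}. On chains, the correspondence $\Psi$ between mixed Poisson complexes intertwines the twisted boundaries $\partial_\upsilon$ and $\partial_{\upsilon^!}$ and the mixed differentials $\tilde d$. It also sends $\gamma$ to $\gamma^!$ (up to a fixed sign or scalar, which cancels in $\Delta$). Because $\Psi$ intertwines $\tilde\iota_\upsilon$ with $\tilde\iota_{\upsilon^!}$ and $\tilde d$ with $\tilde d$, Cartan's formula shows that it intertwines $L_\upsilon$ with $L_{\upsilon^!}$. Consequently $\Psi$ maps the zero-eigenvalue subcomplex of $L_\upsilon$ isomorphically onto that of $L_{\upsilon^!}$. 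This identification is compatible with the inclusion quasi-isomorphisms of Theorem~\ref{diagonalizable}, so $\Psi$ induces $B^!\circ\Psi_*=\Psi_*\circ B$ on twisted Poisson homology.

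Finally, twisted Poincaré duality $\mathrm{PD}(X)=\iota_X\gamma$ is intertwined as well. Indeed $\Psi(\iota_X\gamma)=\iota_{\Phi(X)}\Psi(\gamma)=\pm\iota_{\Phi(X)}\gamma^!$ by the contraction compatibility of the differential calculus isomorphism (Corollary~\ref{diffcaldual}). Composing the three intertwinings gives $\Phi\circ\Delta=\Delta^!\circ\Phi$. Combined with the Gerstenhaber isomorphism, this yields the BV isomorphism.

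I expect the main obstacle to be the sign and normalization bookkeeping. The twisted boundary $\partial_\pi-\tilde\iota_\upsilon$ must map exactly to $\partial_{\pi^!}-\tilde\iota_{\upsilon^!}$, rather than to a version twisted by $-\upsilon^!$ or a rescaled field. This depends on checking that the divergence computed with $\gamma$ and the divergence computed with $\gamma^!$ agree under the correspondence, including the Koszul signs coming from the bigrading. I would first verify this directly on the quadratic generators, where $\upsilon$ is linear. There $\mathrm{Div}_\gamma(\pi)$ is a trace-type expression in the structure constants, and I would compare it with the dual trace, using the supertrace sign flips between the $x,y$ and $\xi,\eta$ variables.
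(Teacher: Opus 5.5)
Your proposal is correct and is essentially the paper's proof. Proposition~\ref{modulardual} transfers diagonalizability, Theorem~\ref{mainthm1.1} gives both BV structures, and Theorems~\ref{comdiagram} and~\ref{isodiff} give the intertwinings of $\mathrm{PD}$ and $B$, so $\Delta=-\mathrm{PD}^{-1}B\,\mathrm{PD}$ is carried to $\Delta^!$; note that your $\Phi$ and $\Psi$ are the paper's $\Psi$ and $\Phi$, and the sign issue you flag at the end is already settled by Proposition~\ref{modulardual}, which in the paper's notation derives $\Psi\,\mathrm{Div}_\gamma=\mathrm{Div}_{\gamma^!}\Psi$ from $\Phi D_\gamma=D_{\gamma^!}\Psi$ and $\Phi\tilde d=\tilde d\Phi$ with $\Phi(\gamma)=\gamma^!$ exactly, so no coordinate trace check is needed.
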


When the modular vector field vanishes, the twisted Poisson
boundary reduces to the usual one, and the zero-eigenvalue
subcomplex becomes the whole complex. Our construction then
recovers the unimodular case of \cite{CY}.

The rest of the paper is organized as follows.
Section~\ref{sect:Poisson} introduces the bigraded pair,
proves twisted duality and constructs the homotopy
equivalences with the zero-eigenvalue subcomplexes. Section~\ref{sect:BV} obtains the
differential calculus with duality and proves
Theorem~\ref{mainthm1.1}. Section~\ref{sect:Koszul}
establishes compatibility with Koszul duality and proves
Theorem~\ref{mainthm}. Section~\ref{sect:examples}
works out some non-unimodular examples.

\section{Bigraded Poisson complexes and the modular obstruction}\label{sect:Poisson}

We recall the conventions for bigraded Poisson structures and mixed differential forms from \cite{CY}, and then establish twisted duality and the zero-eigenvalue contraction. Throughout this section, $A=\mathcal O(\Sigma)$ for a finite-dimensional bigraded space $\Sigma$. Section~\ref{sect:Koszul} applies these constructions to the quadratic Koszul dual pair introduced above.

\subsection{The bigraded Koszul pair and grading conventions}
\label{subsect:bigraded-pair}

A \textit{bigraded vector space} is a vector space equipped with a decomposition $$V=\bigoplus_{(i,j)\in \mathbb{Z}\times\mathbb{Z}}V_{i,j}.$$
An element $a\in V_{i,j}$ is homogeneous of degree $(i,j)$. We call $i$ its \textit{parity degree} and $j$ its \textit{weight}, and write $p(a)=i$ and $w(a)=j$.
A \textit{bigraded associative algebra} is a bigraded vector space $A$ equipped with an associative product which maps $A_{i_1,j_1} \times A_{i_2,j_2}$ to $A_{i_1+i_2, j_1+j_2}$.
The algebra $A$ is called bigraded commutative if
$$xy=(-1)^{p(x)p(y)+w(x)w(y)}yx,$$
for any homogeneous elements $x,y\in A$.

Throughout, $k$ is a field of characteristic zero.
For a homogeneous element $a$, write
$|a|=(p(a),w(a))$. In sign exponents, we use the pairing
$$
|a||b|:=p(a)p(b)+w(a)w(b).
$$
This is the Deligne sign convention \cite{deligne1999notes}. We recall the two bigraded spaces used below; see \cite{CY} for details.

\begin{example}
Let $k^{m||n}=\mathrm{Span}_k\{x_1,\cdots, x_m, y_1,\cdots, y_n\}$
with
$|x_i|=(0,0)$ and $|y_j|=(1,0)$.
The coordinate degrees thus give the relations
$$
x_i x_a=x_a x_i,\qquad
x_i y_j=y_j x_i,\qquad
y_j y_b=-y_b y_j.
$$
Thus, as an ordinary associative algebra,
$\mathcal O(k^{m||n})=k[x_1,\ldots,x_m]\otimes
\Lambda(y_1,\ldots,y_n)$.

For the second space, let $k^{n\wedge m}=\mathrm{Span}_k\{\xi_1,\cdots,\xi_m,\eta_1,\cdots,\eta_n\}$, with $|\xi_i|=(0,-1)$ and $|\eta_j|=(-1,-1)$.
The commutation relations are
$$
\xi_i\xi_a=-\xi_a\xi_i,\qquad
\xi_i\eta_j=-\eta_j\xi_i,\qquad
\eta_j\eta_b=\eta_b\eta_j.
$$
With these degrees, both function algebras are bigraded commutative and form a Koszul dual pair \cite[Theorem~2.6]{CY}.
\end{example}

Here \emph{polynomial degree} means total degree in the coordinate generators, each of which has degree one. It is independent of the parity degree and weight. We use polynomial degree to define quadratic Poisson structures and the bigrading to determine signs.

\subsection{Bigraded Poisson algebras and mixed complexes}

We recall the complexes and operations used to construct the BV operator for a finite-dimensional bigraded space $\Sigma$. Section~\ref{sect:Koszul} compares these constructions for the Koszul dual pair introduced above.

\begin{definition}[Bigraded Poisson algebra]
    Let $A$ be a bigraded commutative algebra. A bivector field $\pi$ satisfying $[\pi,\pi]=0$ defines a \textit{Poisson algebra} $(A,\pi)$.
\end{definition}
Here $[-,-]$ denotes the bigraded Schouten--Nijenhuis bracket
of degree $(0,1)$ in \cite[Definition~3.7]{CY}.
For homogeneous polyvectors it satisfies
$$
[P,Q]=-(-1)^{p(P)p(Q)+(w(P)+1)(w(Q)+1)}[Q,P].
$$
In particular, if $|\pi|=(0,-2)$, then
$[P,\pi]=(-1)^{w(P)}[\pi,P]$. A Poisson bivector $\pi$ induces the \textit{Poisson bracket}
\begin{align*}
    \{f,g\}:=\pi(df,dg),\ \mathrm{for\ any}\ f,g\in A, 
\end{align*}
on $A$. This bracket satisfies bigraded skew-commutativity, the Jacobi identity, and the Leibniz rule.

\begin{definition}[Derivation]
Let $A$ be a bigraded algebra. A \textit{derivation} of $A$ with values in an $A$-module $M$ is a $k$-linear map $\upsilon: A \to M$ satisfying
$$
\upsilon(ab) = \upsilon(a)b + (-1)^{p(a)p(\upsilon)+w(a)w(\upsilon)}a\upsilon(b) \quad \text{for all homogeneous } a,b \in A,
$$
and $\upsilon(c)=0$ for every $c \in k$. 
We denote the set of such derivations by $\overline{\mathfrak X}(A,M)$, abbreviated to $\overline{\mathfrak X}(A)$ or $\operatorname{Der}(A)$ when $M=A$.
\end{definition}

The $A$-module $\mathfrak X^1(A)$ has the same underlying
vector space as $\overline{\mathfrak X}(A)$, with the weight of
each homogeneous element decreased by one.

\begin{definition}[Poisson derivation]
    Suppose $A$ is a bigraded Poisson algebra. A \textit{Poisson derivation} of $A$ is a derivation $\upsilon \in \mathrm{Der}(A)$ satisfying $$\upsilon (\{f,g\})=\{\upsilon (f),g\}+(-1)^{p(f)p(\upsilon)+w(f)w(\upsilon)}\{f,\upsilon (g)\}$$
    for any homogeneous elements $f,g\in A$.
    
\end{definition}

\begin{definition}[Polyvectors]
Suppose $A$ is a bigraded commutative algebra.
For $p\ge 0$, the $A$-module of
$p$-th polyvectors, denoted by 
$\mathfrak{X}^p(A)$, is defined as the 
$p$-th bigraded symmetric product of
$
\mathfrak{X}^1(A)
$; that is,
$$
\mathfrak{X}^p(A):=\mathrm{Sym}^p_A(\mathfrak{X}^1(A)).
$$
The $A$-module of all polyvectors on $A$ is then
$$\mathfrak{X}^\bullet(A):=
\bigoplus_{p\ge 0}\mathfrak{X}^p(A).$$
\end{definition}

\begin{definition}[Poisson cochain complex]
Following \cite[Definition~4.3]{CY}, let
$A=\mathcal{O}(\Sigma)$ be a bigraded Poisson algebra with
$\deg(\pi)=(0,-2)$.
Its Poisson cochain complex is defined by
$$
\mathrm{CP}^{s,t}(A)
=
\{P\in\mathfrak{X}^{\bullet}(A)
\mid \deg(P)=(-s,-t)\},
\qquad
\delta^\pi=[-,\pi]:
\mathrm{CP}^{s,t}(A)\longrightarrow
\mathrm{CP}^{s,t+1}(A).
$$
Its cohomology is denoted by $\mathrm{HP}^{s,t}(A)$.
\end{definition}

We write $\mathfrak X^{a,b}(A)$ and
$\widetilde\Omega^{a,b}(A)$ for the components of degree $(a,b)$. Thus $\mathrm{CP}^{s,t}(A)=
\mathfrak X^{-s,-t}(A)$, whereas
$\widetilde{\mathrm{CP}}_{s,t}(A)=\widetilde\Omega^{s,t}(A)$.
A single superscript in $\mathfrak X^q(A)$ or
$\widetilde\Omega^q(A)$ instead denotes polyvector or mixed form
length, respectively.

For the Poisson chain complex, we recall the mixed differential forms of \cite{CY}. Their coefficient algebra is generated by the coordinates of even total parity and the derivatives with respect to coordinates of odd total parity.

Let $\Sigma$ be a bigraded vector space, and write
$A=\mathcal{O}(\Sigma)=\operatorname{Sym}
\langle z_1,\ldots,z_l\rangle$.
The homogeneous generators are ordered so that
$p(z_i)+w(z_i)$ is even for $1\leq i\leq m$ and odd
for $m<i\leq l$, where $0\leq m\leq l$.

Define the bigraded subalgebra of $\mathfrak X^\bullet(A)$ by
$$
\tilde{\Omega}^{0}(A):= \operatorname{Sym}\{ z_1,\dots ,z_m,\;
\partial_{m+1},\dots ,\partial_l \}.
$$
Define $\tilde d$ on the generators by $\tilde d(z_i)=dz_i$ for $i\leq m$ and $\tilde d(\partial_j)=z_j^*$ for $j\geq m+1$. Extend these assignments to a derivation of degree $(0,1)$,
$$
\tilde d:\tilde\Omega^0(A)\longrightarrow\tilde\Omega^1(A):=\Big\{\sum_{i=1}^m f_i dz_i+\sum_{j=m+1}^l f_j z_j^*\mid f_i,f_j\in\tilde\Omega^0(A)\Big\}.
$$
    
\begin{definition}[Mixed differential forms]
 For $p>0$, the space of \textit{mixed differential $p$-forms} is the $p$th bigraded symmetric power
    \begin{align*}
        \tilde{\Omega}^p(A)=\mathrm{Sym}^p_{\tilde{\Omega}^0(A)}(\tilde{\Omega}^1(A)).
    \end{align*}
The \textit{total space of mixed differential forms} is
    \begin{align*}
        \tilde{\Omega}^\bullet(A):=\bigoplus_{p\ge 0}\tilde{\Omega}^p(A)
        =\mathrm{Sym}\{z_1,\cdots,z_m,\partial_{m+1},\cdots,\partial_l,dz_1,
        \cdots,dz_m,z_{m+1}^*,\cdots,z_l^*\}.
    \end{align*}
\end{definition}
Extend $\tilde d$ to $\tilde\Omega^\bullet(A)$ by setting $\tilde d(dz_i)=\tilde d(z_j^*)=0$ and using the bigraded Leibniz rule. Its square vanishes on the generators and hence on the whole algebra. Thus $\tilde d$ is a differential of degree $(0,1)$ satisfying
\begin{align*}
    \tilde{d}(\omega\wedge \eta)=\tilde{d}\omega\wedge\eta+
    (-1)^{|\omega||\tilde{d}|}\omega\wedge \tilde{d}\eta,
\end{align*}
for any $\omega, \eta\in \tilde{\Omega}^{\bullet}(A)$.
This map $\tilde{d}$ is called the \textit{mixed differential}.

The generator pairings used for mixed contraction are
$$
\langle dz_i,\partial_j\rangle=\delta_{ij}
\quad(1\leq i,j\leq m),\qquad
\langle z_s^*,z_t\rangle=\delta_{st}
\quad(m<s,t\leq l).
$$
All other pairings of generators vanish.

\begin{definition}[Mixed vector fields]
The space of \textit{mixed vector fields} on $A$ is the subspace $$\tilde{\mathfrak{X}}^1(A):
=\Big\{\sum_{i=1}^{m}g_i\partial_{i}+\sum_{j=m+1}^{l}g_jz_j\mid g_i,g_j\in 
\tilde{\Omega}^0(A)\Big\}\subset\mathfrak X^\bullet(A).$$
For each integer $p\ge 1$, the space of \textit{mixed $p$-vector fields} is
$$
\tilde{\mathfrak X}^p(A):=\operatorname{Sym}_{\tilde\Omega^0(A)}^p(\tilde{\mathfrak X}^1(A))\subset\mathfrak X^\bullet(A),
$$
where the symmetric power is bigraded.
\end{definition}

\begin{definition}[Mixed contraction]
For a homogeneous mixed vector field $X$, the \textit{mixed contraction} $\tilde\iota_X$ is the derivation of bidegree $|X|$ on $\widetilde\Omega^\bullet(A)$ determined by $\tilde\iota_X(f)=0$ for $f\in\widetilde\Omega^0(A)$ and $\tilde\iota_X(\omega)=\langle\omega,X\rangle$ for $\omega\in\widetilde\Omega^1(A)$. It lowers mixed form length by one.
\end{definition}
The mixed contraction satisfies the bigraded Leibniz rule: for homogeneous elements $\alpha,\beta\in \tilde{\Omega}^\bullet(A)$,
\begin{align}\label{mixedcontraction:Leibniz}
    \tilde{\iota}_X(\alpha\wedge\beta)=(\tilde{\iota}_X\alpha)\wedge\beta+
    (-1)^{p(\alpha)p(X)+w(\alpha)w(X)}\alpha\wedge(\tilde{\iota}_X\beta).
\end{align}
For $f\in\widetilde\Omega^0(A)$, set $\tilde\iota_f(\omega)=f\omega$ and $\tilde\iota_{fP}=f\tilde\iota_P$. These operations extend to all
of $\mathfrak X^\bullet(A)$, viewed as the mixed polyvector
algebra over $\widetilde\Omega^0(A)$, by
$$
\tilde{\iota}_{P\wedge Q}:=\tilde{\iota}_P\circ\tilde{\iota}_Q \quad P,Q\in\mathfrak{X}^\bullet(A).
$$

\begin{proposition}[Mixed Cartan formula]\cite{CY}\label{mixedcartan}
Let $\Sigma$ be a bigraded space and let $A = \mathcal{O}(\Sigma)$ be defined as above. For any homogeneous mixed polyvector fields $P$ and $Q$, the following identity holds:
\begin{align}\label{mixedcartanformula}
    [[\tilde{\iota}_P,\tilde{d}],\tilde{\iota}_Q] = \tilde{\iota}_{[P,Q]}.
\end{align}
    \end{proposition}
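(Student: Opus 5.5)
The statement to prove is the mixed Cartan formula, Proposition~\ref{mixedcartan}: $[[\tilde\iota_P,\tilde d],\tilde\iota_Q]=\tilde\iota_{[P,Q]}$. Throughout, all brackets of operators are graded commutators with respect to the bigraded sign pairing $|a||b|$. The plan is to reduce to generators and then use derivation properties.

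\emph{Step 1: structure of the operators.} Since $\tilde\iota_{P\wedge Q}=\tilde\iota_P\circ\tilde\iota_Q$, the assignment $P\mapsto\tilde\iota_P$ is an algebra map into the commutative algebra of contraction operators. Both sides of the identity are bilinear in $P$ and $Q$. Both are also compatible with products in each slot:
\begin{itemize}
\item For the right side, the Schouten bracket satisfies the graded Poisson (Leibniz) rule $[P,Q\wedge R]=[P,Q]\wedge R\pm Q\wedge[P,R]$, by \cite[Definition~3.7]{CY}.
\item For the left side, set $D_P:=[\tilde\iota_P,\tilde d]$. The operators $\tilde\iota_Q$ pairwise graded commute, so $[D_P,\tilde\iota_Q\tilde\iota_R]=[D_P,\tilde\iota_Q]\tilde\iota_R\pm\tilde\iota_Q[D_P,\tilde\iota_R]$.
\end{itemize}
Hence it suffices to prove the identity for $Q$ a generator: a coefficient $f\in\tilde\Omega^0(A)$, or a mixed vector field generator $\partial_i$ ($i\le m$) or $z_j$ ($j>m$).

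\emph{Step 2: reduce in $P$.} By the graded Jacobi identity for operators, the left side is graded antisymmetric in $P,Q$ up to the sign matching $[P,Q]=-(-1)^{\dots}[Q,P]$. This holds because $[\tilde d,\tilde\iota_P]$ and $[\tilde d,\tilde\iota_Q]$ combine with $\tilde d^2=0$ and $[\tilde\iota_P,\tilde\iota_Q]=0$:
$$[[\tilde\iota_P,\tilde d],\tilde\iota_Q]=\pm[[\tilde\iota_Q,\tilde d],\tilde\iota_P].$$
So the identity is also multiplicative in $P$, and we may take $P$ to be a generator as well. Note that $D_P$ is a derivation of $\tilde\Omega^\bullet(A)$ whenever $P$ is a vector field (mixed Lie derivative). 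For $P$ a product, this is replaced by the order-raising Leibniz behaviour, and that is exactly what the Step~1 reduction handles.

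\emph{Step 3: generator checks.} The remaining cases are:
\begin{itemize}
\item If $P=f$ and $Q=g$ are both functions, then $D_f=\pm\,\tilde df\wedge$, which commutes with multiplication by $g$. Both sides vanish, since $[f,g]=0$.
\item If $P=X$ is a mixed vector field and $Q=g$, then $[D_X,g]$ is multiplication by $D_X(g)=\tilde\iota_X\tilde dg=\pm X(g)=\pm[X,g]$, using the pairings $\langle dz_i,\partial_j\rangle$ and $\langle z_s^*,z_t\rangle$.
\item If $X$ and $Y$ are both coordinate generators, then $[D_X,\tilde\iota_Y]$ is a derivation of degree $|X|+|Y|+(0,1)$ lowering form length by one. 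It is therefore $\tilde\iota$ of some vector field, determined by its values on $\tilde dz$ and $z^*$. For constant-coefficient generators $D_X$ kills all the generators $dz_i$ and $z_j^*$. Computing directly, $D_X$ acts on the coordinate functions $z_i$ and $\partial_j$ by constants, so $D_X(\tilde d z_i)=0$. Hence the bracket vanishes, matching $[\partial_i,\partial_j]=0$.
\end{itemize}

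The main obstacle is sign bookkeeping, i.e.\ verifying that the signs produced by the graded Jacobi and Leibniz rules agree exactly with the convention $[P,Q]=-(-1)^{p(P)p(Q)+(w(P)+1)(w(Q)+1)}[Q,P]$ of \cite{CY}. The shift by one in weight reflects that $\tilde d$ has degree $(0,1)$. I would fix the signs by checking the case $P=X$, $Q=g$ explicitly, and then propagating through Step~1.
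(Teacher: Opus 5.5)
The paper does not prove this formula. It quotes it from \cite{CY} and uses it as a black box in Lemma~\ref{lem:modular-poisson}, Proposition~\ref{prop2.14} and Corollary~\ref{diffcaldual}. Your argument is therefore a genuinely different, self-contained route, and it is correct. It is the derived-bracket principle: $\beta(P,Q)=[[\tilde\iota_P,\tilde d],\tilde\iota_Q]$ and $\tilde\iota_{[P,Q]}$ are biderivations of the same bidegree with the same symmetry, so they agree once they agree on generators. Compared with the citation, this makes explicit that the formula is forced by the generator pairings.

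The propagation signs you postpone do close with the paper's conventions. Using $|\tilde\iota_P|=|P|$, $|\tilde d|=(0,1)$ and $[\tilde\iota_P,\tilde\iota_Q]=0$, the Jacobi identity gives $\beta(P,Q)=(-1)^{|P||Q|+w(P)+w(Q)}\beta(Q,P)$. This is exactly the stated symmetry of $[-,-]$; note that $\tilde d^2=0$ is not needed for it. Likewise $[D_P,-]$ yields the Leibniz sign $(-1)^{|P||Q|+w(Q)}$ of Definition~\ref{diffcalculus}.

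The base case, however, is not something you can fix by choice. It is a genuine check against the normalization of the bracket in \cite{CY}. Since $\langle dz_k,\partial_k\rangle=\langle z_k^*,z_k\rangle=1$, you get $D_{\partial_k}(z_k)=1$ when $p(z_k)+w(z_k)$ is even, and $D_{z_k}(\partial_k)=1$ when it is odd. So you need $[\partial_k,z_k]=1$ in the first case and $[z_k,\partial_k]=1$ in the second. The symmetry rule gives $[z_k,\partial_k]=-(-1)^{p(z_k)}[\partial_k,z_k]$. Hence for coordinates with $p$ even and $w$ odd, such as $\xi_i$, you need $[\partial_{\xi_i},\xi_i]=-1$ rather than the naive $\partial_{\xi_i}(\xi_i)=1$. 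This is consistent with $\Psi$ preserving brackets and sending $[\partial_{x_i},x_i]=1$ to $[\xi_i,\partial_{\xi_i}]=1$, but you must read it off from the definition in \cite{CY}.

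Finally, in your third case the relevant vanishing brackets include $[\partial_i,z_j]=0$ for $i\le m<j$, not only $[\partial_i,\partial_j]=0$.
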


\begin{definition}[Mixed Poisson chain complex]
Following \cite[Definition~4.5]{CY}, let
$A=\mathcal{O}(\Sigma)$ be a bigraded Poisson algebra with
$\deg(\pi)=(0,-2)$.
Its mixed Poisson chain complex is defined by
$$
\widetilde{\mathrm{CP}}_{s,t}(A)
=
\widetilde{\Omega}^{s,t}(A),
\qquad
\partial_\pi=[\tilde{\iota}_\pi,\tilde d]:
\widetilde{\mathrm{CP}}_{s,t}(A)\longrightarrow
\widetilde{\mathrm{CP}}_{s,t-1}(A).
$$
Its homology is denoted by
$\widetilde{\mathrm{HP}}_{s,t}(A)$.
\end{definition}

By \cite[Proposition~4.4]{CY},
$\partial_\pi^2=0$ and
$\partial_\pi\tilde d+\tilde d\partial_\pi=0$.

\subsection{The modular obstruction and twisted duality}

Contraction with a mixed volume form identifies the polyvector and mixed form spaces. The modular vector field measures the failure of this contraction map to intertwine the untwisted Poisson differentials. For the ordinary modular obstruction and unimodular duality, see \cite{wei1997,xu1999gerstenhaber}. We use the mixed volume formulation of \cite[Section~4.3]{CY} to establish twisted duality, and then use the eigenvalue construction to obtain the BV operator.

\begin{definition}[Diagonalizable derivation]
A derivation $\upsilon\in \text{Der}(A)$ of an algebra $ A $ over $ k $ is called \textit{diagonalizable} (or \textit{semisimple}) if $A$ admits a decomposition of $ k $-vector spaces
$$
A = \bigoplus_{\lambda \in S} A_\lambda, \quad A_\lambda := \{a \in A \mid \upsilon (a) = \lambda a\},
$$ for some subset $ S \subseteq k $.
\end{definition}

Since $\upsilon(1)=0$, we have $1\in A_0$. Thus $0$ is an eigenvalue of $\upsilon$ on $A$. A top form $\gamma\in\widetilde\Omega^{i,j}(A)$ is called a \textit{mixed volume form} of degree $(i,j)$ if the contraction map
$$
\tilde \iota_{(-)}\gamma:
\mathfrak{X}^{a,b}(A)
\longrightarrow
\widetilde{\Omega}^{i+a,j+b}(A),
\qquad P\longmapsto\tilde\iota_P\gamma
$$
is an isomorphism.
In cochain notation, this map has source $\mathrm{CP}^{s,t}(A)$ and target $\widetilde\Omega^{i-s,j-t}(A)$.

The bigraded sign rule makes $\widetilde\Omega^0(A)$ a
polynomial algebra. The ordered products of distinct mixed
one-form generators form a basis of
$\widetilde\Omega^\bullet(A)$ over $\widetilde\Omega^0(A)$.
Contraction with the ordered product of all mixed one-form generators identifies the mixed polyvector basis with the mixed form basis, up to signs.
The only units of $\widetilde\Omega^0(A)$ are nonzero
scalars, and $\tilde d$ annihilates the one-form generators.
These observations give the following proposition.
\begin{proposition}\label{prop:canonical-volume}
For every finite-dimensional bigraded space $\Sigma$, the form
$$
\gamma_{\mathrm{std}}=dz_1\wedge\cdots\wedge dz_m
\wedge z_{m+1}^*\wedge\cdots\wedge z_l^*
$$
is a mixed volume form. Every mixed volume form is a
nonzero scalar multiple of it, and is $\tilde d$-closed.
\end{proposition}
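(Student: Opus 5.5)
The plan is to prove the three assertions of Proposition~\ref{prop:canonical-volume} in turn: $\gamma_{\mathrm{std}}$ is a mixed volume form, every mixed volume form is a scalar multiple of it, and every mixed volume form is $\tilde d$-closed.

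\textbf{Step 1: $\gamma_{\mathrm{std}}$ is a volume form.} Write $\widetilde\Omega^0(A)=\operatorname{Sym}\{z_1,\dots,z_m,\partial_{m+1},\dots,\partial_l\}$. Each of these generators has even total parity $p+w$. For $z_i$ with $i\le m$ this holds by the ordering. For $\partial_j$ with $j>m$, the weight shift by one changes an odd total parity to an even one. Hence $\widetilde\Omega^0(A)$ is an honest polynomial ring. The one-form generators $dz_i$ and $z_j^*$ all have odd total parity, so they square to zero and anticommute up to sign. Therefore $\widetilde\Omega^\bullet(A)$ is free over $\widetilde\Omega^0(A)$, with basis the ordered products $\omega_I$ over subsets $I\subseteq\{1,\dots,l\}$.

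Dually, $\mathfrak X^\bullet(A)$, viewed as mixed polyvectors over $\widetilde\Omega^0(A)$, is free with basis the ordered products $X_J$ of the dual generators $\partial_1,\dots,\partial_m,z_{m+1},\dots,z_l$. By the Leibniz rule \eqref{mixedcontraction:Leibniz} and the generator pairings, $\tilde\iota_{X_J}\gamma_{\mathrm{std}}=\pm\,\omega_{J^c}$. Since $\tilde\iota_{fP}=f\tilde\iota_P$, the map $P\mapsto\tilde\iota_P\gamma_{\mathrm{std}}$ is $\widetilde\Omega^0(A)$-linear. It sends a basis bijectively to a basis up to signs, so it is an isomorphism. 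Being homogeneous, it restricts to isomorphisms on each bidegree component $\mathfrak X^{a,b}(A)\to\widetilde\Omega^{i+a,j+b}(A)$.

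\textbf{Step 2: uniqueness up to scalars.} A top form has the shape $\gamma=f\gamma_{\mathrm{std}}$ with $f\in\widetilde\Omega^0(A)$, and $f$ is homogeneous because $\gamma$ is. Contraction with $\gamma$ is then $f$ times contraction with $\gamma_{\mathrm{std}}$. Its image is therefore contained in $f\cdot\widetilde\Omega^\bullet(A)$. Surjectivity forces the top-length basis element $\gamma_{\mathrm{std}}$ to lie in $f\,\widetilde\Omega^0(A)\gamma_{\mathrm{std}}$, so $f$ is a unit. The units of a polynomial ring are the nonzero scalars, hence $\gamma=c\,\gamma_{\mathrm{std}}$ with $c\in k^\times$.

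\textbf{Step 3: closedness.} By definition $\tilde d$ vanishes on every $dz_i$ and $z_j^*$. The Leibniz rule then gives $\tilde d\gamma_{\mathrm{std}}=0$, and by linearity $\tilde d(c\gamma_{\mathrm{std}})=0$.

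The main obstacle is Step 1. There one must confirm that the bigraded sign rule really makes $\widetilde\Omega^0(A)$ polynomial and the one-form generators exterior. This needs a careful check of the degrees of $\partial_j$ and $z_j^*$, using the shift of weight by one in $\mathfrak X^1$. One also needs the sign bookkeeping in $\tilde\iota_{X_J}\gamma_{\mathrm{std}}=\pm\omega_{J^c}$. The identity $\tilde\iota_{P\wedge Q}=\tilde\iota_P\tilde\iota_Q$ lets us contract one generator at a time, and each step produces only a sign.
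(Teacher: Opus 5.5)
Your proposal is correct and is essentially the paper's own argument. You show that $\widetilde\Omega^0(A)$ is a polynomial ring, that ordered products of distinct one-form generators give a free basis, that contraction with $\gamma_{\mathrm{std}}$ matches the polyvector basis with the form basis up to signs, that the only units are nonzero scalars, and that $\tilde d$ annihilates the one-form generators. One small wording point: under the Deligne sign rule, two odd-total-parity generators need not anticommute (for instance $|dx_i|=(0,1)$ and $|y_j^*|=(-1,0)$ give $dx_i\wedge y_j^*=y_j^*\wedge dx_i$), but each generator squares to zero and any two commute up to a sign, so your basis claim is unaffected.
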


Fix the standard mixed volume form $\gamma$ from now on.
Contraction $\tilde\iota_{(-)}\gamma$ gives the following diagram
\begin{equation}\label{diag:unimodularPoisson}
\xymatrixcolsep{4pc}
\xymatrix{
\mathrm{CP}^{{\bullet,\bullet}} (A) \ar[r]^-{\tilde\iota_{(-)}\gamma} 
& \widetilde{\mathrm{CP}}_{i-\bullet,j-\bullet}(A) \\
\mathrm{CP}^{\bullet,\bullet-1} (A) \ar[r]^-{\tilde\iota_{(-)}\gamma} 
\ar[u]_{\delta^\pi} & \widetilde{\mathrm{CP}}_{i-\bullet,j-\bullet+1}(A).
\ar[u]_{\partial_\pi}
}
\end{equation}
We do not assume that the diagram in
\eqref{diag:unimodularPoisson} commutes.
The divergence operator is defined by the following commutative diagram:
\begin{equation}\label{diag:divergence}
\xymatrixcolsep{4pc}
\xymatrix{
\mathrm{CP}^{s,t}(A)
\ar[r]^-{D_\gamma}
\ar[d]_{\mathrm{Div}_\gamma}
&
\widetilde{\mathrm{CP}}_{i-s,j-t}(A)
\ar[d]^{\tilde d}
\\
\mathrm{CP}^{s,t-1}(A)
\ar[r]^-{D_\gamma}
&
\widetilde{\mathrm{CP}}_{i-s,j-t+1}(A)
}
\end{equation}
Here $D_\gamma(P)=\tilde{\iota}_P\gamma$, and
$\mathrm{Div}_\gamma:=D_\gamma^{-1}\circ\tilde d\circ D_\gamma$
is the divergence operator associated with $\gamma$.
In the coordinate description, every nonzero term of
$D_\gamma^{-1}\circ\tilde d\circ D_\gamma$ removes one coordinate
derivative from a polyvector monomial. Hence
$\mathrm{Div}_\gamma(\mathfrak X^r(A))
\subseteq\mathfrak X^{r-1}(A)$.
The modular vector field $\upsilon=-\mathrm{Div}_\gamma(\pi)$ belongs
to $\mathfrak X^1(A)$ with degree $(0,-1)$.
The corresponding derivation $\overline\upsilon$ of $A$
has degree $(0,0)$.

\begin{lemma}\label{lem:modular-poisson}
Let $\gamma$ be a closed mixed volume form and let
$\upsilon=-\mathrm{Div}_\gamma(\pi)$.
Then $[\pi,\upsilon]=0$.
\end{lemma}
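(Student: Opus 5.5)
The plan is to prove $[\pi,\upsilon]=0$ by showing that $\mathrm{Div}_\gamma$ is a derivation of the Schouten--Nijenhuis bracket, up to the appropriate sign, and then to apply it to $[\pi,\pi]=0$. This is the bigraded version of the classical fact that the modular vector field is Poisson.

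First I would record two properties of $\mathrm{Div}_\gamma=D_\gamma^{-1}\circ\tilde d\circ D_\gamma$. It squares to zero, since $\tilde d^2=0$. It also satisfies a BV-type identity: the failure of $\mathrm{Div}_\gamma$ to be a derivation of the wedge product is the Schouten bracket,
$$
\mathrm{Div}_\gamma(P\wedge Q)-\mathrm{Div}_\gamma(P)\wedge Q\mp P\wedge\mathrm{Div}_\gamma(Q)=\pm[P,Q].
$$
I would derive this identity from the mixed Cartan formula, Proposition~\ref{mixedcartan}. Write $D_\gamma(P\wedge Q)=\tilde\iota_P\tilde\iota_Q\gamma$. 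Expanding $[[\tilde\iota_P,\tilde d],\tilde\iota_Q]\gamma=\tilde\iota_{[P,Q]}\gamma$ and using $\tilde d\gamma=0$ (Proposition~\ref{prop:canonical-volume}), every term is one of $\tilde\iota_P\tilde d\tilde\iota_Q\gamma$, $\tilde d\tilde\iota_P\tilde\iota_Q\gamma$, $\tilde\iota_Q\tilde d\tilde\iota_P\gamma$ and so on. Each of these is $D_\gamma$ applied to the corresponding term of the identity above, because $\tilde d\tilde\iota_Q\gamma=D_\gamma(\mathrm{Div}_\gamma Q)$. Applying $D_\gamma^{-1}$ then gives the identity.

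From the BV identity together with $\mathrm{Div}_\gamma^2=0$, the standard argument shows that $\mathrm{Div}_\gamma$ is a graded derivation of the bracket:
$$
\mathrm{Div}_\gamma[P,Q]=\pm[\mathrm{Div}_\gamma P,Q]\pm[P,\mathrm{Div}_\gamma Q].
$$
Alternatively, this follows directly by applying $\tilde d$ to the Cartan formula: conjugating $\tilde\iota_{[P,Q]}$ by $\tilde d$ and using $\tilde d^2=0$ yields $[[\tilde\iota_P,\tilde d],\tilde\iota_Q]$ expressed through $\tilde\iota_{\mathrm{Div}P}$-type terms. Taking $P=Q=\pi$ gives $0=\mathrm{Div}_\gamma[\pi,\pi]=\pm[\mathrm{Div}_\gamma\pi,\pi]\pm[\pi,\mathrm{Div}_\gamma\pi]$.

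Since $|\pi|=(0,-2)$, the symmetry rule gives $[\upsilon,\pi]=(-1)^{w(\upsilon)}[\pi,\upsilon]=-[\pi,\upsilon]$, because $w(\upsilon)=-1$. The two terms are therefore equal up to the derivation signs, and I expect them to add, giving $2[\pi,\upsilon]=0$. As $\operatorname{char}k=0$, this yields $[\pi,\upsilon]=0$.

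The main obstacle is the sign bookkeeping. I need to verify that the two terms add rather than cancel, since cancellation would make the argument vacuous. That requires the precise sign in the derivation property of $\mathrm{Div}_\gamma$. Here $\mathrm{Div}_\gamma$ has degree $(0,-1)$, matching the bracket's degree $(0,1)$ shift convention, so it is an odd operator with respect to the shifted weight. I would fix these signs by checking them on generators $\partial_i$ and $z_j$, where $\mathrm{Div}_\gamma$ vanishes and the formula reduces to the bracket of linear fields.
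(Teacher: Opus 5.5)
Your strategy works, and it differs genuinely from the paper's. The paper never sets up a BV identity or a derivation rule for $\mathrm{Div}_\gamma$. It applies the mixed Cartan formula to the pair $(\pi,\upsilon)$ rather than to $(\pi,\pi)$. Since $\tilde d\gamma=0$, one has $\partial_\pi\gamma=-\tilde dD_\gamma(\pi)=\tilde\iota_\upsilon\gamma$, hence
\[
D_\gamma([\pi,\upsilon])=[\partial_\pi,\tilde\iota_\upsilon]\gamma
=\partial_\pi\tilde\iota_\upsilon\gamma+\tilde\iota_\upsilon\partial_\pi\gamma
=\partial_\pi^2\gamma+\tilde\iota_\upsilon^2\gamma=0.
\]
The commutator is an anticommutator because both operators have degree $(0,-1)$. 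The hypothesis $[\pi,\pi]=0$ enters only through $\partial_\pi^2=0$, and $\tilde\iota_\upsilon^2=\tilde\iota_{\upsilon\wedge\upsilon}=0$. That is two lines with one sign to check. Your route yields more. The identity you extract from the Cartan formula is exactly \eqref{eq:second-order} on representatives, with $-\mathrm{Div}_\gamma$ in place of $\Delta$; the derivation in the proof of Theorem~\ref{thm:bv-duality} never needs cohomology. So you get a chain-level BV operator generating the bracket, and the classical proof that the modular field is Poisson, at the cost of the sign bookkeeping.

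That sign step does come out right, but the check you propose cannot decide it. $\mathrm{Div}_\gamma$ kills $\partial_i$ and $z_j$, so on these generators both sides of the derivation rule vanish identically. Do it directly instead. Put $\Delta=-\mathrm{Div}_\gamma$ and $\Phi(P,Q)=\Delta(P\wedge Q)-\Delta(P)\wedge Q-(-1)^{w(P)}P\wedge\Delta(Q)$, so that \eqref{eq:second-order} reads $[P,Q]=(-1)^{w(P)}\Phi(P,Q)$. Expanding with $\Delta^2=0$ gives $\Delta\Phi(P,Q)=-\Phi(\Delta P,Q)-(-1)^{w(P)}\Phi(P,\Delta Q)$, since the terms $\Delta P\wedge\Delta Q$ cancel. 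Therefore
\[
\mathrm{Div}_\gamma[P,Q]=[\mathrm{Div}_\gamma P,Q]+(-1)^{w(P)+1}[P,\mathrm{Div}_\gamma Q].
\]
For $P=Q=\pi$, using $w(\pi)=-2$ and $[\upsilon,\pi]=-[\pi,\upsilon]$, this gives $0=\mathrm{Div}_\gamma[\pi,\pi]=-[\upsilon,\pi]+[\pi,\upsilon]=2[\pi,\upsilon]$.

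Conceptually, this is the Koszul derivation rule for the shifted pairing $p(P)p(Q)+(w(P)+1)(w(Q)+1)$ that governs the symmetry of the bracket. For such a derivation $D$ and any $x$ odd in that pairing, as $\pi$ is, graded antisymmetry turns $[Dx,x]$ into exactly the second term, so the two always add. The factor $(-1)^{w(P)}$ in \eqref{eq:second-order} is what forces the Koszul sign; with a weight-independent factor there, the two terms would cancel.
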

\begin{proof}
Since $\tilde d\gamma=0$,
$\partial_\pi\gamma=-\tilde dD_\gamma(\pi)
=\tilde\iota_\upsilon\gamma$.
The operators $\partial_\pi$ and $\tilde\iota_\upsilon$
both have degree $(0,-1)$, so the mixed Cartan formula gives
$$
D_\gamma([\pi,\upsilon])
=[\partial_\pi,\tilde\iota_\upsilon]\gamma
=\partial_\pi^2\gamma+\tilde\iota_\upsilon^2\gamma=0.
$$
Here $\tilde\iota_\upsilon^2=\tilde\iota_{\upsilon\wedge\upsilon}=0$,
since $|\upsilon|=(0,-1)$ and $\operatorname{char}k=0$.
Injectivity of $D_\gamma$ proves the assertion.
\end{proof}

For a Poisson vector field $\upsilon\in\mathfrak X^1(A)$
of degree $(0,-1)$, define the \textit{twisted
mixed Poisson complex} directly by
$$
\widetilde{\mathrm{CP}}_{s,t}(A;\upsilon)
:=\widetilde\Omega^{s,t}(A),\qquad
\partial_\upsilon:=\partial_\pi-\tilde\iota_\upsilon.
$$
The mixed Cartan identity gives
$[\partial_\pi,\tilde\iota_\upsilon]=
\tilde\iota_{[\pi,\upsilon]}=0$, and therefore
$\partial_\upsilon^2=0$. Its homology is denoted by
$\widetilde{\mathrm{HP}}_{s,t}(A;\upsilon)$.
For the modular vector field, the twist is precisely the
correction required by volume contraction in
Proposition~\ref{prop2.14}.

Following \cite{CY}, unimodularity means the existence of a
mixed volume form which is a Poisson cycle.
By Proposition~\ref{prop:canonical-volume} and
$\partial_\pi\gamma=\tilde\iota_\upsilon\gamma$, this is
equivalent to $\upsilon=0$. The twisted boundary is
chosen so that $\partial_\upsilon\gamma=0$ even when
$\upsilon\ne0$. The next proposition extends this identity
from the volume form to contraction by arbitrary polyvectors.

\begin{proposition}\label{prop2.14}
Let $A=\mathcal O(\Sigma)$ have a Poisson bivector $\pi$ of degree $(0,-2)$ and a mixed volume form $\gamma$, which is $\tilde d$-closed by Proposition~\ref{prop:canonical-volume}. Then every homogeneous polyvector $P$ satisfies
\begin{align}\label{twistedpartial}
(-1)^{w(P)}\tilde{\iota}_{\delta^\pi(P)}\gamma
=
\partial_\pi(\tilde{\iota}_P\gamma)
-
\tilde{\iota}_\upsilon(\tilde{\iota}_P\gamma),
\end{align}
where $\upsilon$ denotes the modular vector field associated with $\gamma$.
\end{proposition}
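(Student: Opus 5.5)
The plan is to rewrite both sides of \eqref{twistedpartial} with the mixed Cartan formula (Proposition~\ref{mixedcartan}) and then use $\tilde d\gamma=0$ together with the definition of $\upsilon$ to cancel the remaining term.

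\textbf{Step 1: the left-hand side.} Put $\delta^\pi(P)=[P,\pi]$ and use the symmetry rule $[P,\pi]=(-1)^{w(P)}[\pi,P]$. This gives $(-1)^{w(P)}\tilde\iota_{\delta^\pi(P)}\gamma=\tilde\iota_{[\pi,P]}\gamma$. Proposition~\ref{mixedcartan} with $\partial_\pi=[\tilde\iota_\pi,\tilde d]$ then yields
$$
\tilde\iota_{[\pi,P]}\gamma=[\partial_\pi,\tilde\iota_P]\gamma
=\partial_\pi\tilde\iota_P\gamma-(-1)^{\epsilon}\tilde\iota_P\partial_\pi\gamma ,
$$
where $\epsilon$ is the bigraded commutator sign between $\partial_\pi$, of degree $(0,-1)$, and $\tilde\iota_P$, of degree $|P|$. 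Since $|\partial_\pi|=(0,-1)$, this sign is $\epsilon=w(P)$.

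\textbf{Step 2: the correction term.} By the computation in Lemma~\ref{lem:modular-poisson}, closedness of $\gamma$ gives $\partial_\pi\gamma=\tilde\iota_\upsilon\gamma$. Substituting, the correction becomes $(-1)^{w(P)}\tilde\iota_P\tilde\iota_\upsilon\gamma$. Because $\tilde\iota_{P\wedge Q}=\tilde\iota_P\tilde\iota_Q$ and polyvectors graded-commute,
$$
\tilde\iota_P\tilde\iota_\upsilon=(-1)^{p(P)\cdot 0+w(P)\cdot(-1)}\tilde\iota_\upsilon\tilde\iota_P=(-1)^{w(P)}\tilde\iota_\upsilon\tilde\iota_P .
$$
The correction therefore equals $\tilde\iota_\upsilon\tilde\iota_P\gamma$, up to the overall sign conventions, and the left-hand side becomes $\partial_\pi(\tilde\iota_P\gamma)-\tilde\iota_\upsilon(\tilde\iota_P\gamma)$, which is the claimed identity.

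\textbf{Main obstacle: sign bookkeeping.} Three points need checking carefully.
\begin{itemize}
\item The precise sign in the graded commutator $[\partial_\pi,\tilde\iota_P]$ under the Deligne convention.
\item Whether the mixed Cartan formula is stated with $\tilde\iota_P$ on the left or on the right of $\partial_\pi$.
\item The commutation sign of $\tilde\iota_P$ past $\tilde\iota_\upsilon$.
\end{itemize}
The two factors of $(-1)^{w(P)}$ have to combine to $+1$, and this needs verification rather than assumption. If they instead combine to $-1$, I would recheck the sign of $\partial_\pi\gamma=\tilde\iota_\upsilon\gamma$, that is, $\partial_\pi\gamma=-\tilde dD_\gamma(\pi)$, against the definition $\upsilon=-\mathrm{Div}_\gamma(\pi)$.

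\textbf{Fallback.} As a check, and as an alternative if the abstract signs become opaque, I would verify the identity in the two extreme cases.
\begin{itemize}
\item For $P=1$: the left side is $(-1)^0\tilde\iota_{[1,\pi]}\gamma=0$. The right side is $\partial_\pi\gamma-\tilde\iota_\upsilon\gamma=0$, consistent with Step~2.
\item For $P$ a coordinate vector field: check directly.
\end{itemize}
Both sides are first-order differential operators in $P$ with respect to the $A$-module structure, compatible with $\tilde\iota_{P\wedge Q}=\tilde\iota_P\tilde\iota_Q$. The identity could then be propagated by induction on polyvector length, but I expect the direct Cartan argument to suffice.
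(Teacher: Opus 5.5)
Your proposal is correct and is essentially the paper's proof: the paper also uses the mixed Cartan formula to get $\tilde\iota_{[P,\pi]}\gamma=(-1)^{w(P)}\partial_\pi(\tilde\iota_P\gamma)-\tilde\iota_P(\partial_\pi\gamma)$, substitutes $\partial_\pi\gamma=\tilde\iota_\upsilon\gamma$ (from $\tilde d\gamma=0$), and commutes $\tilde\iota_P$ past $\tilde\iota_\upsilon$ with the sign $(-1)^{w(P)}$, exactly as in your Steps~1--2. Your sign computation already closes up exactly, since the two factors $(-1)^{w(P)}$ cancel, so the hedge ``up to the overall sign conventions'' and the fallback induction are unnecessary.
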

\begin{proof}
    By the mixed Cartan formula \eqref{mixedcartanformula}, 
    \begin{align}\label{cartan}
\tilde{\iota}_{[P,\pi]}(\gamma)
=
(-1)^{w(P)}
\partial_\pi(\tilde{\iota}_P\gamma)
-
\tilde{\iota}_P(\partial_\pi\gamma).
\end{align}
Since $\tilde d\gamma=0$, we have $\partial_\pi\gamma=\tilde\iota_\upsilon\gamma$. Bigraded commutativity of contractions then gives
$$
\tilde\iota_\upsilon\tilde\iota_P\gamma
=(-1)^{|P||\upsilon|}\tilde\iota_P\tilde\iota_\upsilon\gamma
=(-1)^{w(P)}\tilde\iota_P(\partial_\pi\gamma).
$$
    The result follows by substitution into \eqref{cartan}.
\end{proof}

\subsection{The zero-eigenvalue construction}
The twisted complex provides the homology required for duality. However, $\tilde d$ need not anticommute with $\partial_\upsilon$ and therefore may not induce an operator on this homology. Following \cite[Section~3]{chen2023batalin}, we identify their anticommutator with the modular action and construct a contracting homotopy on each nonzero eigenspace.

Luo, Wang and Wu construct BV structures for smooth Poisson algebras with trivial canonical bundle under the following condition: the modular vector field is the contraction of $\pi$ with a closed differential one-form $\varpi$. Their construction modifies the de Rham differential \cite{luo2024}.
For ordinary polynomial Poisson algebras, every such form
is exact, so their condition is equivalent to the modular
vector field being Hamiltonian \cite{luo2024}.

Our operator is defined using the zero-eigenvalue subcomplex of the modular action. Diagonalizability does not require the modular vector field to be Hamiltonian: in Example~\ref{ex:nondiagonal}, the modular vector field is nonzero and linear, whereas a Hamiltonian vector field for a quadratic Poisson structure has no nonzero linear part. Thus this example does not belong to the polynomial class satisfying the condition of Luo, Wang and Wu.

Let $\Sigma$ be a finite-dimensional bigraded vector space
over a field $k$ of characteristic zero, and let
$A=\mathcal{O}(\Sigma)$ be equipped with a Poisson structure
$\pi$ of degree $(0,-2)$.
Let $\gamma\in\widetilde{\Omega}^{u,v}(A)$ be the
standard mixed volume form; it is closed by
Proposition~\ref{prop:canonical-volume}.
Let $\upsilon=-\mathrm{Div}_{\gamma}(\pi)$ be the associated
modular Poisson vector field.
Its degree as an element of $\mathfrak{X}^1(A)$ is
$(0,-1)$; the corresponding derivation of $A$, denoted by
$\overline{\upsilon}$, has degree $(0,0)$.
We use the twisted mixed Poisson boundary
$\partial_\upsilon=\partial_\pi-\tilde\iota_\upsilon$
and set $L_\upsilon=[\tilde\iota_\upsilon,\tilde d]$.

\begin{theorem}\label{horiso}
Contraction with $\gamma$ induces an isomorphism
\begin{equation}\label{twistedpoincare}
\mathrm{HP}^{\bullet,\bullet}(A)
\cong
\widetilde{\mathrm{HP}}_{u-\bullet,v-\bullet}
(A;\upsilon).
\end{equation}
\end{theorem}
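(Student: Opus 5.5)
The plan is to show that $D_\gamma=\tilde\iota_{(-)}\gamma$ is an isomorphism of complexes, up to a degree-dependent sign, from $(\mathrm{CP}^{\bullet,\bullet}(A),\delta^\pi)$ to $(\widetilde{\mathrm{CP}}_{u-\bullet,v-\bullet}(A;\upsilon),\partial_\upsilon)$; passing to (co)homology then gives \eqref{twistedpoincare}.

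The first step is bijectivity in each bidegree. By Proposition~\ref{prop:canonical-volume}, $\gamma$ is a mixed volume form. So by definition $D_\gamma\colon \mathfrak X^{a,b}(A)\to\widetilde\Omega^{u+a,v+b}(A)$ is an isomorphism for every $(a,b)$. With $\mathrm{CP}^{s,t}(A)=\mathfrak X^{-s,-t}(A)$, this gives linear isomorphisms $\mathrm{CP}^{s,t}(A)\cong\widetilde\Omega^{u-s,v-t}(A)$.

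The second step is the intertwining relation. Proposition~\ref{prop2.14} already gives
$$
(-1)^{w(P)}D_\gamma(\delta^\pi P)=\partial_\upsilon(D_\gamma P)
$$
for homogeneous $P$, where $\partial_\upsilon=\partial_\pi-\tilde\iota_\upsilon$. This is exactly the twisted boundary defining $\widetilde{\mathrm{CP}}(A;\upsilon)$, because $\upsilon$ is the modular vector field of $\gamma$ and is Poisson by Lemma~\ref{lem:modular-poisson}. Hence $\partial_\upsilon^2=0$, and the target complex is well defined.

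The third step absorbs the sign $(-1)^{w(P)}$. On $\mathrm{CP}^{s,t}(A)$ we have $w(P)=-t$. Define $\Phi(P):=\epsilon(t)\,D_\gamma(P)$ with a sign $\epsilon(t)\in\{\pm1\}$ satisfying $\epsilon(t+1)=(-1)^{t}\epsilon(t)$; for instance $\epsilon(t)=(-1)^{t(t-1)/2}$ works. Then
$$
\Phi(\delta^\pi P)=\epsilon(t+1)D_\gamma(\delta^\pi P)=\epsilon(t+1)(-1)^{t}\partial_\upsilon D_\gamma P=\partial_\upsilon\Phi(P).
$$
So $\Phi$ is a chain isomorphism, carrying the $(s,t)$ term to the $(u-s,v-t)$ term, and the differential raising $t$ to the one lowering the second homological index. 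A bijective chain map induces an isomorphism on homology. Since $\Phi$ differs from $D_\gamma$ only by signs, contraction with $\gamma$ itself sends cocycles to cycles and coboundaries to boundaries, which proves the theorem.

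The only real care point is the sign bookkeeping: checking that $w(\delta^\pi P)=w(P)-1$ under the paper's grading, so that the sign genuinely depends only on $t$, and matching the index direction of $\partial_\upsilon$. Both follow from $|\pi|=(0,-2)$ and the bracket having degree $(0,1)$. Even without the rescaling, $D_\gamma$ maps $\ker\delta^\pi$ onto $\ker\partial_\upsilon$ and $\operatorname{im}\delta^\pi$ onto $\operatorname{im}\partial_\upsilon$, since signs do not affect kernels and images.
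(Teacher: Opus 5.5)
Your proposal is correct and takes the same approach as the paper's proof. Bijectivity of $D_\gamma$ comes from the volume-form property. The intertwining $\partial_\upsilon D_\gamma(P)=(-1)^{t}D_\gamma(\delta^\pi P)$ comes from Proposition~\ref{prop2.14} with $w(P)=-t$, and the sign $(-1)^{t(t-1)/2}$ absorbs it to give a chain isomorphism, which the paper calls $F$.
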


\begin{proof}
Since $\gamma$ is a mixed volume form, the map
$$
D_\gamma:
\mathrm{CP}^{s,t}(A)
\longrightarrow
\widetilde{\mathrm{CP}}_{u-s,v-t}(A;\upsilon),
\qquad
D_\gamma(P)=\tilde\iota_P\gamma,
$$
is an isomorphism of vector spaces.
By Proposition~\ref{prop2.14},
$$
\partial_\upsilon D_\gamma(P)
=(-1)^tD_\gamma(\delta^\pi P),
\qquad P\in\mathrm{CP}^{s,t}(A),
$$
because $w(P)=-t$.
Thus $D_\gamma$ identifies cycles and boundaries
in the two complexes.

More precisely, define
$$
F(P)=(-1)^{t(t-1)/2}D_\gamma(P),
\qquad P\in\mathrm{CP}^{s,t}(A).
$$
Since $\delta^\pi P\in\mathrm{CP}^{s,t+1}(A)$, we obtain
$$
F(\delta^\pi P)
=(-1)^{t(t+1)/2}D_\gamma(\delta^\pi P)
=\partial_\upsilon F(P).
$$
Hence $F$ is an isomorphism of complexes after reindexing
the target by $(s,t)\mapsto(u-s,v-t)$.
The asserted isomorphism follows.
\end{proof}

\begin{lemma}
The operators $\partial_\upsilon$ and $\tilde d$ satisfy
\begin{equation}\label{homotopy}
\partial_\upsilon\tilde d+\tilde d\partial_\upsilon
=-L_\upsilon.
\end{equation}
Moreover, $L_\upsilon$ commutes with both
$\partial_\upsilon$ and $\tilde d$, and
$L_\upsilon\gamma=0$.
\end{lemma}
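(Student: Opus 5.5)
We prove the three assertions in order, working throughout with graded commutators of operators on $\widetilde\Omega^\bullet(A)$. The operators $\partial_\pi$, $\tilde\iota_\upsilon$ and $\tilde d$ have degrees $(0,-1)$, $(0,-1)$ and $(0,1)$. With the sign pairing $|a||b|$, each pair among them therefore has sign exponent odd, so every bracket between them is an anticommutator. In particular $L_\upsilon=[\tilde\iota_\upsilon,\tilde d]=\tilde\iota_\upsilon\tilde d+\tilde d\tilde\iota_\upsilon$ is an even operator of degree $(0,0)$.

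\emph{The homotopy identity.} By \cite[Proposition~4.4]{CY} we have $\partial_\pi\tilde d+\tilde d\partial_\pi=0$. Hence
$$
\partial_\upsilon\tilde d+\tilde d\partial_\upsilon
=(\partial_\pi\tilde d+\tilde d\partial_\pi)-(\tilde\iota_\upsilon\tilde d+\tilde d\tilde\iota_\upsilon)
=-L_\upsilon,
$$
which is \eqref{homotopy}. The only point to check is that the bracket sign convention in the definition of $L_\upsilon$ really gives the anticommutator, and this follows from the degree count above.

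\emph{Commutation relations.} For $\tilde d$ we use $\tilde d^2=0$:
$$
[L_\upsilon,\tilde d]=\tilde\iota_\upsilon\tilde d^2+\tilde d\tilde\iota_\upsilon\tilde d-\tilde d\tilde\iota_\upsilon\tilde d-\tilde d^2\tilde\iota_\upsilon=0.
$$
For $\partial_\upsilon$ we use $\partial_\upsilon^2=0$, already established. Writing $-L_\upsilon=\partial_\upsilon\tilde d+\tilde d\partial_\upsilon$, the same telescoping gives
$$
\partial_\upsilon(\partial_\upsilon\tilde d+\tilde d\partial_\upsilon)=\partial_\upsilon\tilde d\partial_\upsilon=(\partial_\upsilon\tilde d+\tilde d\partial_\upsilon)\partial_\upsilon .
$$
Thus $L_\upsilon$ commutes with $\partial_\upsilon$. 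As a cross-check, the mixed Cartan formula \eqref{mixedcartanformula} gives $[L_\upsilon,\tilde\iota_Q]=\tilde\iota_{[\upsilon,Q]}$, up to sign. For $Q=\pi$ this reproduces commutation with $\partial_\pi$ via Lemma~\ref{lem:modular-poisson}, and for $Q=\upsilon$ it gives $[\upsilon,\upsilon]=0$ by graded skew-symmetry in degree $(0,-1)$. This route is not needed, however.

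\emph{The identity $L_\upsilon\gamma=0$.} Since $\tilde d\gamma=0$, we have $L_\upsilon\gamma=\tilde d(\tilde\iota_\upsilon\gamma)=\tilde d\partial_\pi\gamma$, using $\partial_\pi\gamma=\tilde\iota_\upsilon\gamma$ from the proof of Lemma~\ref{lem:modular-poisson}. By anticommutation this equals $-\partial_\pi\tilde d\gamma=0$. Equivalently, $\tilde\iota_\upsilon\gamma=-\tilde d D_\gamma(\pi)$ is exact, hence closed.

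The main obstacle is sign bookkeeping rather than any conceptual step. One must confirm that the bracket $[\tilde\iota_\upsilon,\tilde d]$ is indeed the anticommutator under the Deligne convention. One must also confirm that $\partial_\pi\gamma=\tilde\iota_\upsilon\gamma$ holds with the stated sign, which rests on $\upsilon=-\mathrm{Div}_\gamma(\pi)$ and $\tilde d\gamma=0$. Once these two points are settled, everything else is formal algebra with $\tilde d^2=0$, $\partial_\upsilon^2=0$ and $[\partial_\pi,\tilde d]=0$.
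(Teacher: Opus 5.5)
Your proposal is correct and follows essentially the same route as the paper. You derive the homotopy identity from $\partial_\pi\tilde d+\tilde d\partial_\pi=0$, and you get the two commutation relations by telescoping with $\partial_\upsilon^2=\tilde d^2=0$. For $L_\upsilon\gamma=0$ you use $\partial_\pi\gamma=\tilde\iota_\upsilon\gamma$ together with $\tilde d\gamma=0$; the paper uses $\partial_\upsilon\gamma=0$ (Proposition~\ref{prop2.14} at $P=1$), which is the same identity.
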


\begin{proof}
Using
$\partial_\pi\tilde d+\tilde d\partial_\pi=0$, we have
$$
\partial_\upsilon\tilde d+\tilde d\partial_\upsilon
=
-\bigl(\tilde\iota_\upsilon\tilde d
+\tilde d\tilde\iota_\upsilon\bigr)
=-L_\upsilon.
$$
Since $\partial_\upsilon^2=\tilde d^2=0$,
equation~\eqref{homotopy} implies
$[L_\upsilon,\partial_\upsilon]=[L_\upsilon,\tilde d]=0$.
Proposition~\ref{prop2.14}, applied to $P=1$, gives
$\partial_\upsilon\gamma=0$.
Together with $\tilde d\gamma=0$, this yields
$L_\upsilon\gamma=0$.
\end{proof}

\begin{definition}With the notation above, 
for $\lambda\in k$, set
\begin{align*}
\mathrm{CP}^{s,t}_\lambda(A)
&:=
\{P\in\mathrm{CP}^{s,t}(A)
\mid[\upsilon,P]=\lambda P\},\\
\widetilde{\mathrm{CP}}^\lambda_{s,t}(A;\upsilon)
&:=
\{\alpha\in\widetilde{\mathrm{CP}}_{s,t}(A;\upsilon)
\mid L_\upsilon\alpha=\lambda\alpha\}.
\end{align*}
\end{definition}

The identity $[\upsilon,\pi]=0$ implies that $\delta^\pi$
preserves the cochain eigenspaces
$\mathrm{CP}^{s,t}_\lambda(A)$. The preceding lemma shows that
$\partial_\upsilon$ and $\tilde d$ preserve the chain
eigenspaces $\widetilde{\mathrm{CP}}^\lambda_{s,t}(A;\upsilon)$.
Consequently, these eigenspaces define subcomplexes.

Since $L_\upsilon\gamma=0$, the mixed Cartan identity gives
$$
L_\upsilon D_\gamma(P)
=
L_\upsilon(\tilde\iota_P\gamma)
=
\tilde\iota_{[\upsilon,P]}\gamma
=
D_\gamma([\upsilon,P]).
$$
Consequently, $D_\gamma$ preserves eigenvalues, as does the isomorphism of complexes $F$ constructed in the proof of Theorem~\ref{horiso}.

\begin{theorem}\label{diagonalizable}
Suppose that $\overline{\upsilon}$ is diagonalizable
on $A$ over $k$.
Then the inclusions of the zero-eigenvalue subcomplexes
are homotopy equivalences. In particular,
$$
\widetilde{\mathrm{HP}}_{\bullet,\bullet}(A;\upsilon)
\cong
\mathrm{H}_{\bullet,\bullet}
\bigl(\widetilde{\mathrm{CP}}^0_{\bullet,\bullet}
(A;\upsilon)\bigr),\quad
\mathrm{HP}^{\bullet,\bullet}(A)
\cong
\mathrm{H}^{\bullet,\bullet}
\bigl(\mathrm{CP}^{\bullet,\bullet}_0(A)\bigr).
$$
\end{theorem}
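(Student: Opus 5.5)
The plan is to first transfer diagonalizability from $A$ to the whole chain and cochain spaces, and then contract each nonzero eigenspace using the homotopy identity \eqref{homotopy}.

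\textbf{Step 1: diagonalizability on forms and polyvectors.} The operator $L_\upsilon=[\tilde\iota_\upsilon,\tilde d]$ is a derivation of degree $(0,0)$ of the bigraded commutative algebra $\widetilde\Omega^\bullet(A)$. This algebra is freely generated by $z_i,\partial_j,dz_i,z_j^*$. Since $\overline\upsilon$ preserves the bigrading and is diagonalizable, it preserves each finite-dimensional polynomial-degree-bounded piece and induces a diagonalizable action on $\Sigma^*$-coordinates in the linear part. To avoid relying on linearity, I would argue instead via the isomorphism $D_\gamma$. We have $L_\upsilon D_\gamma=D_\gamma[\upsilon,-]$, so it suffices to show that $[\upsilon,-]$ is diagonalizable on $\mathfrak X^\bullet(A)$.

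On $\mathfrak X^\bullet(A)=\operatorname{Sym}_A(\mathfrak X^1(A))$, the operator $[\upsilon,-]$ is a derivation extending $\overline\upsilon$ on $A$. On $\mathfrak X^1(A)\cong\operatorname{Der}(A)$ it acts as the commutator $X\mapsto[\overline\upsilon,X]$, up to sign. If $A=\bigoplus A_\lambda$, then $\operatorname{Hom}_k(A,A)$ decomposes compatibly on derivations determined by their values on the finitely many generators $z_i$. Each $z_i$ lies in a finite sum of eigenspaces, and each $\overline\upsilon$-stable subspace is a sum of eigenspaces. Hence a derivation $X$ decomposes as $X=\sum_\mu X_\mu$ with $[\overline\upsilon,X_\mu]=\mu X_\mu$. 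Concretely, define $X_\mu$ on a generator eigencomponent $a\in A_\lambda$ by taking the $A_{\lambda+\mu}$-component of $X(a)$, and then extend as a derivation. This is consistent because $[\overline\upsilon,X]$ is again a derivation.

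A derivation acting diagonalizably on a set of algebra generators acts diagonalizably on the algebra, since products of eigenvectors are eigenvectors. This gives $\mathrm{CP}^{s,t}(A)=\bigoplus_\lambda\mathrm{CP}^{s,t}_\lambda(A)$ and, via $D_\gamma$, $\widetilde{\mathrm{CP}}_{s,t}=\bigoplus_\lambda\widetilde{\mathrm{CP}}^\lambda_{s,t}$. These are decompositions of complexes, because $\delta^\pi$, $\partial_\upsilon$ and $\tilde d$ preserve eigenspaces.

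\textbf{Step 2: contracting the nonzero eigenspaces.} On $\widetilde{\mathrm{CP}}^\lambda(A;\upsilon)$ with $\lambda\neq0$, set $h=-\lambda^{-1}\tilde d$. Then \eqref{homotopy} gives $\partial_\upsilon h+h\partial_\upsilon=\mathrm{id}$, so each such summand is contractible. Consequently the inclusion of $\widetilde{\mathrm{CP}}^0$ is a homotopy equivalence, with inverse the projection. The global homotopy is $H=\sum_{\lambda\ne0}-\lambda^{-1}\tilde d\,\mathrm{pr}_\lambda$, which is well defined because every element has finitely many components.

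For the cochain side, I would transport this homotopy through the isomorphism of complexes $F$ from Theorem~\ref{horiso}. Since $F$ preserves eigenvalues, it carries $\mathrm{CP}^{\bullet,\bullet}_\lambda$ onto $\widetilde{\mathrm{CP}}^\lambda$, and contractibility of $\mathrm{CP}_\lambda$ for $\lambda\ne0$ follows. The homology isomorphisms are then immediate.

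\textbf{Main obstacle.} The main difficulty is Step 1, namely showing that diagonalizability of $\overline\upsilon$ on $A$ forces diagonalizability on $\mathfrak X^1(A)$. The key point is that derivations of the free algebra $A$ are determined freely by their values on the generators. I would verify that the eigencomponent $X_\mu$ defined above is a derivation whose commutator with $\overline\upsilon$ is $\mu X_\mu$. This requires checking that the map on generators extends compatibly with the eigen-decomposition of generator components. The cleanest way to do this is to note that $A$ is spanned by monomials in $\overline\upsilon$-eigenvectors lying in the span of the generators' components. If that span is not generated freely, I would instead use that $X\mapsto[\overline\upsilon,X]$ is locally finite on $\operatorname{Der}(A)$ (it preserves the finite-dimensional spaces of derivations sending the generators into a fixed finite-dimensional $\overline\upsilon$-stable subspace) and is semisimple there, because it is the restriction of the semisimple operator $\phi\mapsto\overline\upsilon\phi-\phi\overline\upsilon$ on $\operatorname{Hom}(V,W)$ with $V,W$ finite-dimensional and semisimple.
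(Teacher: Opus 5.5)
Your proposal is correct and follows essentially the paper's route. You transfer diagonalizability to $\operatorname{Der}(A)$ by restricting derivations to a finite-dimensional $\overline\upsilon$-stable generating subspace; your fallback via $\operatorname{Hom}(V,W')$ is the finite-dimensional form of the paper's $\operatorname{Hom}_k(W,A)\cong A^{\oplus N}$ argument. You then extend to $\mathfrak X^\bullet(A)$ multiplicatively, transport along $D_\gamma$, contract the nonzero eigenspaces with $-\lambda^{-1}\tilde d$, and pull the homotopy back through $F$, just as the paper does. Your ``concrete'' extension of $X_\mu$ as a derivation is justified only by that fallback (equivalently, $X_\mu=p_\mu(\operatorname{ad}\overline\upsilon)X$ for a Lagrange interpolation polynomial $p_\mu$). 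Drop the opening aside about preserving polynomial-degree pieces, since $\overline\upsilon$ need not be linear here.
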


\begin{proof}
To obtain the eigenspace decompositions, take the eigencomponents of a finite homogeneous generating set of $A$. Their span $W\subset A$ is a finite-dimensional homogeneous $\overline\upsilon$-invariant subspace, because $\overline\upsilon$ is diagonalizable and preserves the bigrading. The subspace $W$ still generates $A$.

Restriction to $W$ gives an injective equivariant map
$$
\operatorname{Der}_k(A)
\longrightarrow
\operatorname{Hom}_k(W,A),
\qquad
D\longmapsto D|_W,
$$
where the action on the target is
$T\mapsto\overline\upsilon\circ T
-T\circ\overline\upsilon|_W$.
Choose an eigenbasis $e_1,\ldots,e_N$ of $W$, with $\overline\upsilon(e_i)=\lambda_i e_i$. Evaluation on this basis identifies the action on $\operatorname{Hom}_k(W,A)$ with $\bigoplus_{i=1}^N(\overline\upsilon-\lambda_i)$ on $A^{\oplus N}$, which is diagonalizable.

Every invariant subspace of a diagonalizable operator
is a direct sum of eigenspaces: the finitely many
eigencomponents of any vector are recovered by
polynomial spectral projections. The same argument
applies to invariant quotients. Consequently,
$[\overline\upsilon,-]$ is diagonalizable on
$\operatorname{Der}_k(A)$, and the shifted action
$[\upsilon,-]$ is diagonalizable on $\mathfrak X^1(A)$.
Since this action is a derivation of the wedge product,
tensor products and invariant quotients give
diagonalizability on
$\mathfrak X^r(A)=\operatorname{Sym}_A^r\mathfrak X^1(A)$
for every $r\geq1$. Together with the hypothesis on
$\mathfrak X^0(A)=A$, this yields
$$
\mathrm{CP}^{s,t}(A)
=
\bigoplus_{\lambda\in k}\mathrm{CP}^{s,t}_\lambda(A).
$$
Transporting this decomposition along the
eigenvalue-preserving isomorphism $D_\gamma$, we obtain
$$
\widetilde{\mathrm{CP}}_{s,t}(A;\upsilon)
=
\bigoplus_{\lambda\in k}
\widetilde{\mathrm{CP}}^\lambda_{s,t}(A;\upsilon).
$$
Both decompositions are algebraic direct sums.

Let $\iota_0$ and $p_0$ denote the inclusion of the
zero-eigenvalue chain subcomplex and the projection
onto it. By \eqref{homotopy},
\begin{equation}\label{homotopyid}
\left.
(\partial_\upsilon\tilde d+\tilde d\partial_\upsilon)
\right|_{\widetilde{\mathrm{CP}}^\lambda_{\bullet,\bullet}
(A;\upsilon)}
=
-\lambda\,\mathrm{id}.
\end{equation}
Define an operator $h$ of homological degree $(0,1)$ by
$$
h\big|_{\widetilde{\mathrm{CP}}^\lambda_{\bullet,\bullet}
(A;\upsilon)}
=
\begin{cases}
-\lambda^{-1}\tilde d,&\lambda\neq0,\\
0,&\lambda=0.
\end{cases}
$$
This operator is well defined because $\tilde d$
preserves eigenspaces and every element has only finitely many nonzero eigencomponents.
Equation~\eqref{homotopyid} gives
$$
\partial_\upsilon h+h\partial_\upsilon
=
\mathrm{id}-\iota_0p_0,
\qquad
p_0\iota_0=\mathrm{id}.
$$
Thus $\iota_0$ is a homotopy equivalence.

For cochains, let $F$ denote the isomorphism of complexes
constructed in Theorem~\ref{horiso}, and let $i_0$ and $q_0$ denote
the zero-eigenvalue inclusion and projection.
Then $H=F^{-1}hF$ has cohomological degree $(0,-1)$
and satisfies
$$
\delta^\pi H+H\delta^\pi
=
F^{-1}(\mathrm{id}-\iota_0p_0)F
=
\mathrm{id}-i_0q_0,
\qquad
q_0i_0=\mathrm{id}.
$$
Hence $i_0$ is also a homotopy equivalence.
The asserted homology and cohomology isomorphisms follow.
\end{proof}

For the analogous eigenvalue-one construction in twisted Hochschild homology,
see also \cite{KK}.
Under the hypothesis of Theorem~\ref{diagonalizable},
equation~\eqref{homotopyid} shows that
$\bigl(
\widetilde{\mathrm{CP}}^0_{\bullet,\bullet}(A;\upsilon),
\partial_\upsilon,\tilde d
\bigr)$
is a mixed complex.
The operator $\tilde d$ on this subcomplex induces
a square-zero operator on the full twisted Poisson
homology by transport:
$$
B :=
(\iota_0)_*
\circ
\bigl(\tilde d|_{\widetilde{\mathrm{CP}}^0
_{\bullet,\bullet}(A;\upsilon)}\bigr)_*
\circ
(p_0)_*.
$$
It has homological degree $(0,1)$.
In what follows, the notation $\tilde d$ on
$\widetilde{\mathrm{HP}}_{\bullet,\bullet}(A;\upsilon)$
refers to this transported operator.

\section{Differential calculus and Batalin--Vilkovisky algebras}\label{sect:BV}
The preceding section provides twisted Poincar\'e duality
and a mixed differential on twisted Poisson homology.
The construction of a BV operator requires compatibility
of these maps with the wedge product, Schouten bracket,
and contraction.
We verify this compatibility on the zero-eigenvalue
subcomplexes, obtain a differential calculus with duality,
and transport the mixed differential to cohomology.

\subsection{The calculus on twisted Poisson homology}
We recall a bigraded formulation of the differential calculus of Tamarkin and Tsygan \cite[Definition~3.2.2]{tamarkin2005ring}.
\begin{definition}[Differential calculus]\label{diffcalculus}
Let $\mathrm{H}^{\bullet,\bullet}$ and $\mathrm{H}_{\bullet,\bullet}$ 
be bigraded vector spaces.
A \textit{differential calculus} is a sextuple
$$
(\mathrm{H}^{\bullet,\bullet},
\mathrm{H}_{\bullet,\bullet}, \cup, \iota, [-,-], d)
$$
satisfying the following conditions:
\begin{enumerate}
\item[(1)] $(\mathrm{H}^{\bullet,\bullet},\cup,[-,-])$ is a 
bigraded Gerstenhaber
algebra.
\begin{itemize}
\item $(\mathrm{H}^{\bullet,\bullet},\cup)$ is a bigraded commutative algebra;
\item $(\mathrm{H}^{\bullet,\bullet},[-,-])$ is a Lie algebra of cohomological degree $(0,-1)$,
corresponding to actual bracket degree $(0,1)$ on polyvectors;
\item the bracket satisfies the bigraded Leibniz rule
$$
[P,Q\cup R]
=[P,Q]\cup R
+(-1)^{|P||Q|+w(Q)}\,Q\cup[P,R],
$$
for homogeneous elements $P, Q, R$;
\end{itemize}

\item[(2)] $\mathrm{H}_{\bullet,\bullet}$ is a bigraded (left) 
module over 
$(\mathrm{H}^{\bullet,\bullet}, \cup)$ via the map
$$
\iota: \mathrm{H}^{i,j}\otimes\mathrm{H}_{s,t}\to 
\mathrm{H}_{s-i,t-j},\;f\otimes p\mapsto \iota_fp,
$$
for any $f\in \mathrm{H}^{i,j}$ and $p \in \mathrm{H}_{s,t}$;
                
\item[(3)] There is a map $d: \mathrm{H}_{\bullet,\bullet}\to \mathrm{H}_{\bullet,\bullet+1}$ 
satisfying $d^2=0$.
With $L_f:=[\iota_f,d]=\iota_fd-(-1)^{w(f)}d\iota_f$, the Cartan identity for homogeneous $f,g\in\mathrm H^{\bullet,\bullet}$ is
\begin{align}\label{calculusformula}
\iota_{[f,g]}=[L_f,\iota_g] :={L}_{f}\iota_g-(-1)^{p(g)p(f)
+w(g)(w(f)+1)}\iota_g {L}_f.
\end{align}       
\end{enumerate}
\end{definition}

\begin{example}
For a finite-dimensional bigraded space $\Sigma$ over $k$
and a Poisson bivector $\pi$ of degree $(0,-2)$ on
$A=\mathcal O(\Sigma)$, the sextuple
$$
(\mathrm{HP}^{\bullet,\bullet}(A),
\widetilde{\mathrm{HP}}_{\bullet,\bullet}(A),
\wedge,\tilde\iota,[-,-],\tilde d)
$$
is a differential calculus by \cite[Theorem~4.6]{CY}.
In particular, this applies to both members of the
admissible quadratic Koszul pair.
\end{example}

The following notion is a bigraded version of Lambre's calculus with duality \cite{lambre2010dualite}.
\begin{definition}[differential calculus with duality]
\label{diff calculus}
A differential calculus $(\mathrm{H}^{\bullet,\bullet},
\mathrm{H}_{\bullet,\bullet}, \cup, \iota, [-,-], d)$ is
called a \textit{differential calculus with duality} if 
there exist integers $u,v$ and an element $\gamma\in \mathrm{H}_{u,v}$
such that
\begin{enumerate}
\item[(1)]  For the unit $1\in\mathrm H^{0,0}$, we have $\iota_1\gamma=\gamma$ and $d(\gamma)=0$;
\item[(2)]  for any $i,j\in \mathbb Z$,
\begin{equation*}\label{PD}
\mathrm{PD}(-):=\iota_{(-)}\gamma : \mathrm{H}^{i,j}
\to \mathrm{H}_{u-i,v-j}
\end{equation*}
is an isomorphism.
\end{enumerate}
We call the isomorphism $\mathrm{PD}$ the
\textit{Van den Bergh duality} (or
\textit{noncommutative Poincar\'e duality}) between
$\mathrm{H}^{\bullet,\bullet}$ and
$\mathrm{H}_{\bullet,\bullet}$.
\end{definition}

We now construct the calculus with duality using twisted Poisson homology and the operator $B$. The zero-eigenvalue subcomplexes allow us to treat the calculus operations and volume class together, as required in the non-unimodular case.

\begin{corollary}\label{diffcaldual}
Let $\Sigma$ be a finite-dimensional bigraded vector space
over a field $k$ of characteristic zero, and let
$A=\mathcal{O}(\Sigma)$ be equipped with a Poisson structure
$\pi$ of degree $(0,-2)$.
Let $\gamma\in\widetilde{\Omega}^{u,v}(A)$ be the
standard mixed volume form.
Assume that the derivation of $A$ corresponding to the modular vector field $\upsilon$ is diagonalizable over $k$.
Then
$$
\bigl(
\mathrm{HP}^{\bullet,\bullet}(A),
\widetilde{\mathrm{HP}}_{\bullet,\bullet}(A;\upsilon),
\wedge,\tilde\iota,[-,-],\tilde d
\bigr)
$$
is a differential calculus with duality,
with fundamental class $[\gamma]$.
\end{corollary}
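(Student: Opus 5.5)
The plan is to verify the axioms of Definition~\ref{diffcalculus} and Definition~\ref{diff calculus} one at a time. Throughout, I would work with representatives in the zero-eigenvalue subcomplexes supplied by Theorem~\ref{diagonalizable}.

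\emph{Gerstenhaber structure and module structure.} Condition (1) is a statement about $\mathrm{HP}^{\bullet,\bullet}(A)$ alone. I would take it directly from \cite[Theorem~4.6]{CY}, since the Poisson cochain complex is unchanged by the twist.

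For condition (2), I would first show that $\tilde\iota$ descends to twisted homology. Let $P$ be a homogeneous $\delta^\pi$-cocycle. The mixed Cartan formula \eqref{mixedcartanformula} gives $[\partial_\pi,\tilde\iota_P]=\pm\tilde\iota_{[\pi,P]}=0$. The operators $\tilde\iota_\upsilon$ and $\tilde\iota_P$ graded-commute, because $\tilde\iota_{P\wedge Q}=\tilde\iota_P\tilde\iota_Q$ and the wedge product is bigraded commutative. Hence $\tilde\iota_P$ graded-commutes with $\partial_\upsilon$. Two consequences follow: cocycles send twisted cycles to twisted cycles, and coboundaries $\delta^\pi R$ act by $\pm[\partial_\upsilon,\tilde\iota_R]$, hence send cycles to boundaries. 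The module axiom then follows from $\tilde\iota_{P\wedge Q}=\tilde\iota_P\tilde\iota_Q$ and $\tilde\iota_1=\mathrm{id}$.

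\emph{The operator and the Cartan identity.} For condition (3), $B^2=0$ holds because $p_0\iota_0=\mathrm{id}$ and $\tilde d^2=0$ on $\widetilde{\mathrm{CP}}^0$. The Cartan identity \eqref{calculusformula} is where I expect the main work. By Theorem~\ref{diagonalizable}, every class in $\mathrm{HP}^{\bullet,\bullet}(A)$ has a representative $P\in\mathrm{CP}_0(A)$. For such $P$, the Cartan formula gives $[L_\upsilon,\tilde\iota_P]=\tilde\iota_{[\upsilon,P]}=0$. So $\tilde\iota_P$ preserves every $L_\upsilon$-eigenspace, and therefore commutes with the spectral projection $p_0$ and the inclusion $\iota_0$. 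As a result, on homology, $\tilde\iota_P\circ B$ and $B\circ\tilde\iota_P$ can both be computed inside $\widetilde{\mathrm{CP}}^0_{\bullet,\bullet}(A;\upsilon)$, where $B$ is induced by the honest differential $\tilde d$.

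On this subcomplex, the chain-level operator $L_P=[\tilde\iota_P,\tilde d]$ preserves $\partial_\upsilon$-cycles modulo boundaries. To see this, expand $\partial_\upsilon=\partial_\pi-\tilde\iota_\upsilon$ and apply the graded Jacobi identity. The resulting terms are of three kinds:
\begin{itemize}
\item terms in $\tilde\iota_{\delta^\pi P}$, which vanish;
\item the term $[\tilde\iota_P,L_\upsilon]$, which vanishes because $P\in\mathrm{CP}_0(A)$;
\item a commutator with $\tilde d$, which is a homotopy that is harmless in homology.
\end{itemize}
Then the chain identity $[[\tilde\iota_P,\tilde d],\tilde\iota_Q]=\tilde\iota_{[P,Q]}$ from Proposition~\ref{mixedcartan}, applied to $P,Q\in\mathrm{CP}_0(A)$ (and $[P,Q]\in\mathrm{CP}_0(A)$, since $[\upsilon,-]$ is a derivation of the bracket), descends to \eqref{calculusformula} on $\widetilde{\mathrm{HP}}_{\bullet,\bullet}(A;\upsilon)$. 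I expect this step to be the main obstacle: the sign bookkeeping is delicate, and one must confirm that the transported $B$ really agrees with $\tilde d$ on zero-eigenvalue representatives.

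\emph{Duality.} First, $\partial_\upsilon\gamma=0$ by Proposition~\ref{prop2.14} with $P=1$, so $[\gamma]$ is a class, and $\tilde\iota_1\gamma=\gamma$ holds. Next, $L_\upsilon\gamma=0$ puts $\gamma$ in the zero eigenspace, so $B[\gamma]=[\tilde d\gamma]=0$ by Proposition~\ref{prop:canonical-volume}. Finally, $\mathrm{PD}=\tilde\iota_{(-)}[\gamma]$ agrees up to the sign $(-1)^{t(t-1)/2}$ with the map induced by $F$. Theorem~\ref{horiso} then shows that $\mathrm{PD}$ is an isomorphism $\mathrm{HP}^{s,t}(A)\to\widetilde{\mathrm{HP}}_{u-s,v-t}(A;\upsilon)$, which completes the verification.
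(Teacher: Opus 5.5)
Your proposal is correct and follows essentially the same route as the paper: you descend $\tilde\iota$ via $[\partial_\upsilon,\tilde\iota_P]=\pm\tilde\iota_{\delta^\pi P}$, use $[L_\upsilon,\tilde\iota_P]=\tilde\iota_{[\upsilon,P]}$ to pass to zero-eigenvalue representatives, descend the mixed Cartan identity there and transport it along the inclusions of Theorem~\ref{diagonalizable}, and get duality from Theorem~\ref{horiso} together with $\partial_\upsilon\gamma=\tilde d\gamma=0$. One small correction: the Jacobi expansion of $[\partial_\upsilon,[\tilde\iota_P,\tilde d]]$ has only the two terms $[[\partial_\upsilon,\tilde\iota_P],\tilde d]$ and $\pm[\tilde\iota_P,L_\upsilon]$, both of which vanish, so there is no leftover ``homotopy'' term; the step is unnecessary anyway, since $\tilde\iota_P$ and $\tilde d$ each descend separately to the homology of the zero-eigenvalue subcomplex.
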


\begin{proof}
The mixed Cartan identity and the graded commutativity
of contractions give, for every homogeneous polyvector $P$,
$$
[\partial_\upsilon,\tilde\iota_P]
=
\tilde\iota_{[\pi,P]}
=
(-1)^{w(P)}\tilde\iota_{\delta^\pi P}.
$$
Hence contraction by a Poisson cocycle preserves twisted
cycles and boundaries, while contraction by a Poisson
coboundary induces the zero map on twisted homology.

Moreover,
$[L_\upsilon,\tilde\iota_P]
=
\tilde\iota_{[\upsilon,P]}$.
Thus contraction by a zero-eigenvalue cochain preserves
the zero-eigenvalue chain complex. Since
$[\upsilon,-]$ is a derivation of both the wedge product
and the Schouten bracket, these operations also preserve
the zero-eigenvalue cochain complex.

On the zero-eigenvalue chain complex,
$\partial_\upsilon\tilde d+\tilde d\partial_\upsilon=0$.
The mixed Cartan identity therefore descends to homology,
giving a differential calculus. By
Theorem~\ref{diagonalizable}, the zero-eigenvalue inclusions
identify this calculus with the asserted one, where
$\tilde d$ denotes the transported operator $B$.

Finally, $\partial_\upsilon\gamma=\tilde d\gamma=0$, and contraction with $[\gamma]$ gives the duality isomorphism of Theorem~\ref{horiso}.
\end{proof}

\subsection{The Batalin--Vilkovisky operator}\label{subsect:bv-operator}
The BV operator is obtained from $B$ by volume duality. We use \cite[Theorem~4.11]{CY} with the convention $\Delta(a)=(-1)^{w(a)}\Delta_{\mathrm{CY}}(a)$, where $\Delta_{\mathrm{CY}}(a)=(-1)^{w(a)+1}\mathrm{PD}^{-1}d\mathrm{PD}(a)$. We derive the resulting bracket identity below in the form of \cite[Proposition~4.9, equation~(23)]{CY}. Here, in the non-unimodular case, $B$ comes from the zero-eigenvalue construction on twisted Poisson homology.
For the classical construction of graded Lie brackets from
second-order differential operators, see
\cite[Section~1]{koszul1985}.

\begin{definition}[Batalin--Vilkovisky algebra]
A {\it Batalin--Vilkovisky
algebra} is a bigraded commutative algebra $A$ equipped with a linear operator $\Delta: A^{i,j}\to A^{i,j-1}$ of degree $(0,-1)$, such that
\begin{enumerate}
\item[$(1)$] $\Delta$ is 
a differential, i.e., $\Delta^2=0$; 
\item[$(2)$] $\Delta$ is a second-order operator; that is,
\begin{align*}
\Delta(a  b c)
=& \Delta(a b)c+(-1)^{|a||\Delta|}a 
\Delta(b  c)+(-1)^{|b|(|a|+|\Delta|)}b  
\Delta(a   c)\nonumber \\
&- (\Delta a)  b   c-(-1)^{|a||\Delta|}a 
(\Delta b) c-(-1)^{|\Delta|(|a|+|b|)}a  b(\Delta c).
\end{align*}
\end{enumerate}
\end{definition}

With the sign convention above, the bigraded version \cite[Theorem~4.11]{CY} of Lambre's construction \cite{lambre2010dualite} takes the following form.
\begin{theorem}[\cite{CY}]\label{thm:bv-duality}
Let
$(\mathrm H^{\bullet,\bullet},\mathrm H_{\bullet,\bullet},
\cup,\iota,[-,-],d,\gamma)$
be a differential calculus with duality, and write
$\mathrm{PD}(a)=\iota_a\gamma$. Then
$\Delta=-\mathrm{PD}^{-1}\circ d\circ\mathrm{PD}$
has degree $(0,-1)$ and defines a Batalin--Vilkovisky
algebra structure on $\mathrm H^{\bullet,\bullet}$. For homogeneous
$a,b\in \mathrm H^{\bullet,\bullet}$, it satisfies
$$
[a,b]
=(-1)^{w(a)}
\bigl(\Delta(a\cup b)-\Delta(a)\cup b\bigr)
-a\cup\Delta(b).
$$
\end{theorem}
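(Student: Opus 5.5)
We prove Theorem~\ref{thm:bv-duality} by Lambre's argument, carried out in the bigraded setting with careful signs. Degree is immediate. $\mathrm{PD}$ sends $\mathrm H^{i,j}$ to $\mathrm H_{u-i,v-j}$, $d$ raises the second homological index by one, and $\mathrm{PD}^{-1}$ reverses the indexing. Hence $\Delta$ sends $\mathrm H^{i,j}$ to $\mathrm H^{i,j-1}$. The identity $\Delta^2=\mathrm{PD}^{-1}d^2\mathrm{PD}=0$ follows from $d^2=0$.

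The core step is the bracket formula. We apply $\mathrm{PD}$ to both sides and compare in $\mathrm H_{\bullet,\bullet}$. Because $\iota$ is a module action, $\iota_{a\cup b}=\iota_a\iota_b$, so $\mathrm{PD}(a\cup b)=\iota_a\mathrm{PD}(b)$. The definition of $\Delta$ gives $\mathrm{PD}(\Delta x)=-d\,\mathrm{PD}(x)$. Next we expand $\iota_{[a,b]}\gamma$ with the Cartan identity~\eqref{calculusformula}:
$$
\iota_{[a,b]}\gamma=L_a\iota_b\gamma-(-1)^{p(a)p(b)+w(b)(w(a)+1)}\iota_bL_a\gamma .
$$
We then write out $L_a=\iota_ad-(-1)^{w(a)}d\iota_a$ and use $d\gamma=0$. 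This gives
$$
L_a\gamma=-(-1)^{w(a)}d\iota_a\gamma=(-1)^{w(a)}\mathrm{PD}(\Delta a).
$$
Similarly,
$$
L_a\iota_b\gamma=-\iota_a\mathrm{PD}(\Delta b)+(-1)^{w(a)}\mathrm{PD}(\Delta(a\cup b)).
$$
To rewrite $\iota_b\,\mathrm{PD}(\Delta a)$ we use bigraded commutativity of $\cup$: it equals $\mathrm{PD}(b\cup\Delta a)$, which becomes $\pm\mathrm{PD}(\Delta a\cup b)$ after reordering. We collect the sign, noting that $\Delta a$ has weight $w(a)-1$, and check that it matches the prescribed $-(-1)^{w(a)}$. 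Injectivity of $\mathrm{PD}$ then yields the stated formula for $[a,b]$.

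With the bracket formula in hand, the second-order property is formal. The formula says that the deviation of $\Delta$ from a derivation,
$$
\Delta(a\cup b)-\Delta a\cup b-(-1)^{w(a)}a\cup\Delta b,
$$
equals $(-1)^{w(a)}[a,b]$. The Gerstenhaber Leibniz rule says $[a,-]$ is a derivation of $\cup$ up to sign. Koszul's criterion~\cite{koszul1985} then shows that $\Delta$ has order at most two: expand $\Delta(abc)$ by treating $ab$ as a single element, apply the Leibniz rule to $[ab,c]$ via graded antisymmetry, and regroup. This reproduces the six-term identity in the definition. Combined with $\Delta^2=0$, $\Delta$ is a BV operator that generates the bracket.

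The main obstacle is sign bookkeeping in the bigraded Deligne convention. Two places need care: converting $\iota_b\iota_{\Delta a}$ into $\iota_{\Delta a\cup b}$, and confirming that the antisymmetry and Leibniz signs for a bracket of actual degree $(0,1)$ combine into exactly the six-term signs, with $|\Delta|=(0,-1)$. We would fix these by checking a few generator cases, such as $a,b$ equal to coordinates or vector fields, before writing the general argument.
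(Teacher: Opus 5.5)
Your proposal is correct and follows the paper's proof. The bracket formula comes from applying the Cartan identity to $\gamma$ using $d\gamma=0$ and $\iota_{a\cup b}=\iota_a\iota_b$; your signs check out, since only the parity of the weight of $\Delta a$ enters. The second-order identity then follows from the Leibniz rule. The only difference is cosmetic: the paper applies the Leibniz rule for $[a,-]$ in the second slot directly, which gives the six-term identity without your detour through antisymmetry on $[ab,c]$.
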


\begin{proof}
The degree of $\Delta$ is $(0,-1)$, since $\mathrm{PD}:\mathrm H^{s,t}\to\mathrm H_{u-s,v-t}$ and $d:\mathrm H_{s,t}\to\mathrm H_{s,t+1}$.
Moreover, $d^2=0$ and $d\gamma=0$ imply
$\Delta^2=0$ and $\Delta(1)=0$.

Applying the Cartan identity
$\iota_{[a,b]}=[[\iota_a,d],\iota_b]$ to $\gamma$,
and using $d\gamma=0$,
$\iota_{a\cup b}=\iota_a\iota_b$ and
$\mathrm{PD}^{-1}(\iota_a\omega)
=a\cup\mathrm{PD}^{-1}(\omega)$, we obtain
\begin{align}\label{eq:second-order}
   [a,b]
=(-1)^{w(a)}
\bigl(\Delta(a\cup b)-\Delta(a)\cup b\bigr)
-a\cup\Delta(b). 
\end{align}
The Leibniz rule for $[a,-]$, applied to \eqref{eq:second-order}, gives the second-order identity for $\Delta$; compare \cite[Proposition~1.2]{getzler1994batalin}. Together with $\Delta^2=0$, this makes $\Delta$ a BV operator.
\end{proof}

For BV structures on the Hochschild cohomology of Frobenius algebras with semisimple Nakayama automorphism, see also \cite{lambre2016}.

\begin{proof}[Proof of Theorem~\ref{mainthm1.1}]
Apply Theorem~\ref{thm:bv-duality} to the calculus with
duality in Corollary~\ref{diffcaldual}, using the operator
$B$ obtained from the zero-eigenvalue complex.
By Proposition~\ref{prop:canonical-volume}, any other mixed volume form
is $c\gamma$ with $c\in k^\times$. This leaves the modular
vector field, its eigenspaces and $B$ unchanged, and
replaces $\mathrm{PD}$ by $c\mathrm{PD}$. Hence
$\Delta=-\mathrm{PD}^{-1}B\mathrm{PD}$ is unchanged.
\end{proof}

\begin{remark}\label{rem:modular-class-bv}
For a Poisson cocycle $P$ in the zero eigenspace,
$\Delta[P]=[-\mathrm{Div}_\gamma(P)]$.
In particular,
$
\Delta[\pi]=[\upsilon].
$
Indeed, $[\pi,\pi]=0$ and Lemma~\ref{lem:modular-poisson} show that $\pi$ is a Poisson cocycle in the zero eigenspace. Substituting $\upsilon=-\mathrm{Div}_\gamma(\pi)$ gives the formula.
\end{remark}

\section{Koszul duality beyond unimodularity}\label{sect:Koszul}

We return to the function algebras of $k^{m||n}$ and $k^{n\wedge m}$ from Section~\ref{subsect:bigraded-pair}, with the same coordinate degrees and sign convention. Starting from the calculus isomorphism of \cite[Theorem~5.5]{CY}, we compare the modular vector fields and prove that diagonalizability is preserved. We then show that the correspondence respects twisted duality and the zero-eigenvalue construction, and hence identifies the BV operators.

\subsection{Koszul duality of quadratic Poisson structures}

Exchanging variables and derivatives sends a quadratic bivector to a bivector on the Koszul dual space. We recall this correspondence and compare the associated modular vector fields. Diagonal Poisson structures provide examples with diagonalizable modular derivations, while the comparison theorem requires only diagonalizability. All symmetric algebras and polyvector products use the bigraded sign rule. For Koszul duality after quadratic deformation quantization, see \cite{shoikhet2010Koszul}.

\begin{definition}[Quadratic Poisson structure]
Let $\Sigma$ be a finite-dimensional bigraded vector space,
and let $A=\mathcal{O}(\Sigma)=\operatorname{Sym}_k(\Sigma^*)$.
A Poisson structure $\pi$ on $A$ is called \emph{quadratic}
if its coefficients are homogeneous polynomials of degree two.
Equivalently, in homogeneous linear coordinates
$z_1,\ldots,z_l$, it has the form
$$
\pi=\sum_{a,b,c,d=1}^{l}
C^{a,b}_{c,d}\,z_a z_b\,
\partial_{z_c}\wedge\partial_{z_d},
\qquad C^{a,b}_{c,d}\in k.
$$
Here polynomial degree refers to the symmetric-algebra grading,
independently of the bigrading. Following \cite{CY}, a
quadratic Poisson structure of degree $(0,-2)$ is
called \emph{admissible quadratic}.
\end{definition}

\begin{definition}[Diagonal Poisson structure]
\label{def:diagonalPoisson}
Let $\Sigma$ be a finite-dimensional bigraded vector space,
and let $A=\mathcal{O}(\Sigma)$.
A Poisson structure $\pi$ on $A$ is called \emph{diagonal}
if there exist homogeneous linear coordinates
$z_1,\ldots,z_l$ such that
$$
\pi=\sum_{1\leq i<j\leq l}
c_{ij}z_i z_j\,
\partial_{z_i}\wedge\partial_{z_j},
\qquad c_{ij}\in k.
$$
In this case, $\pi$ is said to be diagonal with respect
to these coordinates.
\end{definition}

Every diagonal Poisson structure is quadratic.
Under the shifted polyvector convention,
$|\partial_{z_i}|=-|z_i|+(0,-1)$, so each term in the
diagonal expression has degree $(0,-2)$.
Here diagonality is a condition on the Poisson bivector in suitable homogeneous linear coordinates, whereas diagonalizability is a condition on its modular derivation.

For the Koszul dual spaces $k^{m||n}$ and $k^{n\wedge m}$, consider the function algebras
$$
\mathcal{O}(k^{m||n}) = \operatorname{Sym}\{x_1,\dots,x_m, y_1,\dots,y_n \},
\qquad
\mathcal{O}(k^{n\wedge m}) = \operatorname{Sym}\{ \xi_1,\dots,\xi_m, \eta_1,\dots,\eta_n \}.
$$
Write $(z_1,\ldots,z_{m+n})
=(x_1,\ldots,x_m,y_1,\ldots,y_n)$ and
$(\zeta_1,\ldots,\zeta_{m+n})
=(\xi_1,\ldots,\xi_m,\eta_1,\ldots,\eta_n)$.
We define a correspondence between the polyvector fields on the two algebras by exchanging variables and derivatives as follows:
\begin{align}\label{corres}
    z_i\leftrightarrow 
{\partial_{\zeta_i}},\quad{\partial_{z_i}}\leftrightarrow
\zeta_i,\quad i=1,\cdots,m+n.
\end{align}
Set $\epsilon_i=0$ for $1\le i\le m$ and
$\epsilon_i=1$ for $m<i\le m+n$.
We assume that $\pi$ has degree $(0,-2)$ and choose
its coefficient expression so that
$C^{i_1,i_2}_{j_1,j_2}=0$ unless
$\epsilon_{i_1}+\epsilon_{i_2}
=\epsilon_{j_1}+\epsilon_{j_2}$.
All indices below range from $1$ to $m+n$.
Under this correspondence, a quadratic bivector field  
\begin{align}\label{pi1}
    \pi = \sum_{i_1,i_2,j_1,j_2} C^{i_1,i_2}_{j_1,j_2} z_{i_1}z_{i_2}\partial_{z_{j_1}}\wedge \partial_{z_{j_2}}
\end{align}
on $\mathcal{O}(k^{m||n})$ is mapped to the quadratic bivector field  
\begin{align}\label{pi2}
    \pi^! = \sum_{i_1,i_2,j_1,j_2} C^{i_1,i_2}_{j_1,j_2}   \partial_{\zeta_{i_1}}\wedge \partial_{\zeta_{i_2}}\zeta_{j_1}\zeta_{j_2}
\end{align}
on $\mathcal{O}(k^{n\wedge m})$, and vice versa.

\begin{theorem}[Chen--Yu \cite{CY}]\label{quadratic poisson Koszul}
Let $\pi$ and $\pi^!$ be the quadratic bivectors on $k^{m||n}$ and $k^{n\wedge m}$ related by \eqref{corres}. Then $\pi$ is Poisson if and only if $\pi^!$ is Poisson.
\end{theorem}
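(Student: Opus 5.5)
The statement is the theorem of Chen--Yu: $\pi$ is Poisson if and only if $\pi^!$ is Poisson. The plan is to realize the correspondence \eqref{corres} as an isomorphism of bigraded algebras that intertwines Schouten brackets up to a sign. Then $[\pi,\pi]=0$ is carried to $[\pi^!,\pi^!]=0$.

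\emph{Step 1: the correspondence is an algebra isomorphism.} Regard the full polyvector algebra $\mathfrak X^\bullet(\mathcal O(k^{m||n}))$ as the bigraded symmetric algebra $\operatorname{Sym}\{z_i,\partial_{z_i}\}$, and similarly for $\zeta_i,\partial_{\zeta_i}$. Using $|\partial_{z_i}|=-|z_i|+(0,-1)$, one checks that $|\partial_{\zeta_i}|=|z_i|$ and $|\zeta_i|=|\partial_{z_i}|$. For example, $|\partial_{\xi_i}|=(0,1)+(0,-1)=(0,0)=|x_i|$ and $|\partial_{\eta_j}|=(1,1)+(0,-1)=(1,0)=|y_j|$. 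Hence \eqref{corres} preserves bidegrees of generators and therefore all commutation signs. It extends to an isomorphism $\Phi$ of bigraded commutative algebras, and by \eqref{pi1}--\eqref{pi2} it sends $\pi$ to $\pi^!$. The reordering of factors in \eqref{pi2} costs no sign, since $\Phi$ is an algebra map.

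\emph{Step 2: the bracket is the odd Poisson bracket.} In coordinates, the Schouten--Nijenhuis bracket on $\operatorname{Sym}\{z_i,\partial_{z_i}\}$ is given by the odd Poisson bracket $\sum_i(\partial P/\partial\partial_{z_i})(\partial Q/\partial z_i)\pm(\ldots)$. It is determined as a biderivation, with the prescribed Leibniz signs, by its values $[\partial_{z_i},z_j]=\delta_{ij}$ (up to the convention sign of \cite[Definition~3.7]{CY}), while generators of the same type bracket to zero. Under $\Phi$, the roles of position and momentum variables are swapped.

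\emph{Step 3: compare the two brackets.} The bracket $[\Phi P,\Phi Q]$ is a biderivation of the same degree $(0,1)$ on the target. On generators it gives $[\zeta_i,\partial_{\zeta_j}]=\pm\delta_{ij}$, which equals the swapped symmetric pairing. By the skew-symmetry formula $[P,Q]=-(-1)^{p(P)p(Q)+(w(P)+1)(w(Q)+1)}[Q,P]$, it therefore agrees with $\pm\Phi[P,Q]$ on generators. Consequently $\Phi$ intertwines the brackets up to a global sign, or up to a sign depending only on bidegree. Since $\pi$ is homogeneous of degree $(0,-2)$, we get $\Phi[\pi,\pi]=\pm[\pi^!,\pi^!]$, and $\Phi$ is injective. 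This yields both directions, because the construction is symmetric.

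The main obstacle is the sign bookkeeping in Step 3. The skew-symmetry sign involves $(w+1)$, and under the swap the generator $\partial_{z_i}$ of weight $w(z_i)-1$ goes to $\zeta_i$. One must check that $(-1)^{|\partial_{z_i}||z_i|+w(z_i)}$-type signs match the corresponding ones on the dual side for all four generator types $x,y,\xi,\eta$. To handle this, first verify the identity on the generator pairs. Then note that two biderivations of the same degree satisfying the same Leibniz rule and agreeing on generators coincide. If a degree-dependent sign appears, twist $\Phi$ by a sign character $(-1)^{f(\text{length})}$ to absorb it; this changes nothing for $[\pi,\pi]=0$.
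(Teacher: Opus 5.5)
Your overall strategy matches what the paper relies on. The paper takes the theorem from \cite{CY}, and the mechanism it later invokes is \cite[Proposition~3.19]{CY}: the correspondence (your $\Phi$, the paper's $\Psi$) preserves the wedge product and the Schouten bracket exactly, so $[\pi^!,\pi^!]=\Psi[\pi,\pi]$. Step~1 of your proposal is correct.

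The gap is in Step~3, which is where all the content lies. From ``agrees with $\pm\Phi[P,Q]$ on generators'' you conclude ``up to a global sign, or a sign depending only on bidegree''. That does not follow, because the sign on a generator pair depends on its type. Since $|\partial_z|=-|z|+(0,-1)$, the skew-symmetry rule gives $[z,\partial_z]=-(-1)^{p(z)}[\partial_z,z]$. Hence $[\Phi\partial_{x_i},\Phi x_i]=[\xi_i,\partial_{\xi_i}]=-[\partial_{\xi_i},\xi_i]$, whereas $[\Phi\partial_{y_j},\Phi y_j]=[\eta_j,\partial_{\eta_j}]=+[\partial_{\eta_j},\eta_j]$. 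So whether the discrepancy with $\Phi[\partial_z,z]$ is the same for $x$-pairs and $y$-pairs depends on how \cite[Definition~3.7]{CY} normalizes $[\partial_z,z]$ on the four generator types $x,y,\xi,\eta$. With the naive normalization $[\partial_z,z]=1$, the discrepancy is $-1$ on $x$-pairs and $+1$ on $y$-pairs. The pulled-back bracket is then $-[-,-]_x+[-,-]_y$, where $[-,-]_x$ and $[-,-]_y$ are the parts contracting an $x$-pair and a $y$-pair respectively, and this is not $\pm[-,-]$. You must compute with the actual normalization of \cite{CY} and show the sign is uniform in $i$. That is the substance of \cite[Proposition~3.19]{CY}, and your proposal leaves it undone.

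Your fallback cannot absorb a type-dependent sign. A twist by a sign depending on polyvector length multiplies every contraction term of $[\Phi P,\Phi Q]$ by the same factor. The twist that does turn $-[-,-]_x+[-,-]_y$ into $[-,-]$ is $\partial_{x_i}\mapsto-\partial_{x_i}$. Write $\pi=\pi_{xx}+\pi_{xy}+\pi_{yy}$, grouped by the types of the coefficient variables; this twist sends $\pi$ to $\pi_{xx}-\pi_{xy}+\pi_{yy}$, not to $\pm\pi$.

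This changes the Poisson condition. The component of $[\pi,\pi]$ with two $x$'s, two $\partial_x$'s, one $y$ and one $\partial_y$ is $2[\pi_{xx},\pi_{xy}]+[\pi_{xy},\pi_{xy}]_y$, and only the first term flips sign. For instance, on $k^{2||2}$ take $\pi=\alpha\,x_1x_2\partial_{x_1}\partial_{x_2}+\beta_1(x_1\partial_{x_1}+x_2\partial_{x_2})(y_1\partial_{y_1}-y_2\partial_{y_2})+\beta_2\,x_1\partial_{x_2}\,y_1\partial_{y_2}$ with $\beta_1\beta_2\neq0$. Both terms are nonzero multiples of $x_1^2\partial_{x_1}\partial_{x_2}\,y_1\partial_{y_2}$, and all other components of the bracket vanish for both bivectors. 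A suitable $\alpha$ therefore makes $\pi$ Poisson while $\pi_{xx}-\pi_{xy}$ is not. So the uniformity of the generator sign is not bookkeeping to be patched afterwards; it is exactly what makes the theorem true.
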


We next describe this correspondence for diagonal
Poisson structures.

\begin{proposition}
\label{prop:diagonal-koszul}
A Poisson structure on $\mathcal{O}(k^{m||n})$
is diagonal with respect to $x_1,\ldots,x_m$, $y_1,\ldots,y_n$
if and only if it has the form
\begin{equation}\label{eq:diagonal-original}
\begin{aligned}
\pi=
\sum_{1\leq i<j\leq m}
a_{ij}x_i x_j\,
\partial_{x_i}\wedge\partial_{x_j}
+
\sum_{1\leq s<t\leq n}
b_{st}y_s y_t\,
\partial_{y_s}\wedge\partial_{y_t}
+
\sum_{i=1}^{m}\sum_{s=1}^{n}
c_{is}x_i y_s\,
\partial_{x_i}\wedge\partial_{y_s},
\end{aligned}
\end{equation}
where $a_{ij},b_{st},c_{is}\in k$.
Under \eqref{corres}, its Koszul dual is
\begin{equation}\label{eq:diagonal-dual}
\begin{aligned}
\pi^!=
\sum_{1\leq i<j\leq m}
a_{ij}\xi_i\xi_j\,
\partial_{\xi_i}\wedge\partial_{\xi_j}
+
\sum_{1\leq s<t\leq n}
b_{st}\eta_s\eta_t\,
\partial_{\eta_s}\wedge\partial_{\eta_t}
-
\sum_{i=1}^{m}\sum_{s=1}^{n}
c_{is}\xi_i\eta_s\,
\partial_{\xi_i}\wedge\partial_{\eta_s}.
\end{aligned}
\end{equation}
In particular, $\pi^!$ is diagonal with respect to
$\xi_1,\ldots,\xi_m,\eta_1,\ldots,\eta_n$.
Every choice of $a_{ij},b_{st},c_{is}\in k$ gives a Poisson structure. Conversely, every Poisson structure diagonal in the specified dual coordinates is obtained in this way.
\end{proposition}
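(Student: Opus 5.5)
The proposition has three parts: a description of the diagonal bivectors, the computation of the Koszul dual, and the claim that every such bivector is Poisson. I would handle them in that order.

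\emph{Step 1: the form of a diagonal bivector.} A bivector diagonal in the given coordinates is a sum $\sum_{i<j} c_{ij} z_iz_j\,\partial_{z_i}\wedge\partial_{z_j}$. I split the pairs $(i,j)$ by type:
\begin{itemize}
\item both indices among the $x$'s, giving the $a_{ij}$ terms;
\item both among the $y$'s, giving the $b_{st}$ terms;
\item one of each, giving the $c_{is}$ terms.
\end{itemize}
Each term has degree $(0,-2)$, by the shifted degree rule $|\partial_{z_i}|=-|z_i|+(0,-1)$ recalled after Definition~\ref{def:diagonalPoisson}. For the mixed terms, the bigraded symmetry of $\wedge$ and of multiplication lets me normalize the order to $x_iy_s\,\partial_{x_i}\wedge\partial_{y_s}$, absorbing any sign into $c_{is}$. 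Terms with $i=j$ either vanish or are not of the diagonal shape, so only $i<j$ occurs. This gives \eqref{eq:diagonal-original}, and the converse is immediate.

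\emph{Step 2: the Koszul dual.} I apply \eqref{corres} term by term to \eqref{pi1}. The term $z_iz_j\,\partial_{z_i}\wedge\partial_{z_j}$ goes to $\partial_{\zeta_i}\wedge\partial_{\zeta_j}\,\zeta_i\zeta_j$, and I then reorder it into the normal form $\zeta_i\zeta_j\,\partial_{\zeta_i}\wedge\partial_{\zeta_j}$. The reordering is a bigraded commutation of the function factors past the derivation factors inside the polyvector algebra, so it contributes the Deligne sign $(-1)^{|a||b|}$ for each transposition. The dual degrees needed are
\[
|\xi|=(0,-1),\quad |\eta|=(-1,-1),\quad |\partial_{\xi}|=(0,0),\quad |\partial_{\eta}|=(1,0).
\]
In the pure $\xi\xi$ and $\eta\eta$ cases the signs should cancel in pairs, so the $a_{ij}$ and $b_{st}$ terms keep their coefficients. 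In the mixed $\xi\eta$ case an odd number of nontrivial signs survives, because $\xi_i$ and $\eta_s$ anticommute while the corresponding original $x_i$ and $y_s$ commute. That produces the minus sign on the $c_{is}$ terms. This sign bookkeeping is the main obstacle. I would check it on a single monomial of each type, using the explicit convention of \cite{CY} for how $\partial_{\zeta}\wedge\partial_{\zeta}\,\zeta\zeta$ is read as an element of $\mathfrak X^2$. Once \eqref{eq:diagonal-dual} is established, $\pi^!$ is visibly diagonal in the dual coordinates.

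\emph{Step 3: every such bivector is Poisson.} I need $[\pi,\pi]=0$ on the original side. Each summand has the form $c\,E_i\wedge E_j$, where $E_i=z_i\partial_{z_i}$ is the Euler vector field of a single coordinate, viewed as a vector field of degree $(0,-1)$. The $E_i$ pairwise commute under the Schouten bracket, since $[z_i\partial_{z_i},z_j\partial_{z_j}]=0$ for $i\ne j$ and $[E_i,E_i]=0$ by the bigraded symmetry of the bracket. By the Leibniz rule of the Schouten bracket, any bivector in the span of the $E_i\wedge E_j$ then has vanishing self-bracket. One caveat: for an odd coordinate such as $y_s$, the element $y_s\partial_{y_s}$ is still a derivation, and the argument needs no special care there.

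Once $\pi$ is Poisson, Theorem~\ref{quadratic poisson Koszul} shows that $\pi^!$ is Poisson. For the converse statement I run Steps 1 and 2 in reverse, since \eqref{corres} is an involution. Any bivector diagonal in the dual coordinates has the shape \eqref{eq:diagonal-dual}, with the coefficients of the mixed terms negated. It is therefore the dual of a diagonal $\pi$ of the form \eqref{eq:diagonal-original}, and it is Poisson by the same theorem.
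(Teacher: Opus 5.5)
Your proposal is correct and follows the paper's proof essentially step for step. The shared steps are: the split of index pairs by type, term-by-term application of \eqref{corres} followed by bigraded reordering, the pairwise-commuting coordinate Euler fields with the Schouten Leibniz rule for $[\pi,\pi]=0$, Theorem~\ref{quadratic poisson Koszul} for $\pi^!$, and invertibility of $(a_{ij},b_{st},c_{is})\mapsto(a_{ij},b_{st},-c_{is})$ for the converse. With the degrees you list, the reordering sign is $(-1)^{|\partial_{\zeta_i}\partial_{\zeta_j}||\zeta_i\zeta_j|}$, which is $+1,+1,-1$ in the $\xi\xi$, $\eta\eta$, $\xi\eta$ cases; in the mixed case the only nontrivial transposition is $\eta_s$ past $\partial_{\eta_s}$, so the minus sign does not come from $\xi_i\eta_s=-\eta_s\xi_i$ as you suggest.
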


\begin{proof}
The first expression follows from
Definition~\ref{def:diagonalPoisson}, with the coordinates
ordered as $x_1,\ldots,x_m,y_1,\ldots,y_n$.
Let $E_i=x_i\frac{\partial}{\partial x_i}$ and $F_s=y_s\frac{\partial}{\partial y_s}$ be the coordinate Euler derivations, each of bidegree $(0,0)$. They commute pairwise, and \eqref{eq:diagonal-original} is a constant linear combination of their pairwise polyvector products. The Schouten Leibniz rule therefore gives $[\pi,\pi]=0$.

Let $\Psi$ denote the polyvector correspondence
\eqref{corres}.
Reordering its images according to the bigraded sign rule gives
$\Psi\bigl(x_i x_j\,
\partial_{x_i}\wedge\partial_{x_j}\bigr)
=\xi_i\xi_j\,
\partial_{\xi_i}\wedge\partial_{\xi_j}$, 
$\Psi\bigl(y_s y_t\,
\partial_{y_s}\wedge\partial_{y_t}\bigr)
=\eta_s\eta_t\,
\partial_{\eta_s}\wedge\partial_{\eta_t}$ and 
$\Psi\bigl(x_i y_s\,
\partial_{x_i}\wedge\partial_{y_s}\bigr)
=-\xi_i\eta_s\,
\partial_{\xi_i}\wedge\partial_{\eta_s}.$
This proves \eqref{eq:diagonal-dual}.
Theorem~\ref{quadratic poisson Koszul} implies that
$\pi^!$ is Poisson.
Finally, the coefficient transformation $(a_{ij},b_{st},c_{is})\mapsto(a_{ij},b_{st},-c_{is})$ is invertible, so every structure diagonal in the specified dual coordinates arises in this way.
\end{proof}

We next show that the correspondence identifies the modular vector fields and preserves diagonalizability, as needed for the twisted BV constructions.
We equip $A$ and $A^!$ with the mixed volume forms
$$
\gamma
=
dx_1\wedge\cdots\wedge dx_m
\wedge y_1^*\wedge\cdots\wedge y_n^*,
\qquad
\gamma^!
=
\xi_1^*\wedge\cdots\wedge\xi_m^*
\wedge d\eta_1\wedge\cdots\wedge d\eta_n,
$$
respectively. The modular vector fields are defined with
respect to these forms.
\begin{proposition}\label{modulardual}
Let $A=\mathcal O(k^{m||n})$ and
$A^!=\mathcal O(k^{n\wedge m})$ be equipped with
Koszul-dual quadratic Poisson structures $\pi$ and $\pi^!$
of degree $(0,-2)$, respectively.
Let $\upsilon$ and $\upsilon^!$ be their modular vector
fields with respect to the mixed volume forms
$\gamma$ and $\gamma^!$ chosen above.
Then the correspondence \eqref{corres} sends
$\upsilon$ to $\upsilon^!$.
Moreover, the derivation of $A$ corresponding to $\upsilon$ is diagonalizable over $k$ if and only if the derivation of $A^!$ corresponding to $\upsilon^!$ is.
\end{proposition}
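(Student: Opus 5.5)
The plan is to compute both modular vector fields explicitly in coordinates and compare them term by term under \eqref{corres}.

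\emph{Step 1: a divergence formula on each side.} For the standard mixed volume form, I will derive a coordinate formula for $\mathrm{Div}_\gamma$ on bivectors. Since $\gamma$ is, up to sign, the ordered product of all mixed one-form generators, $D_\gamma$ sends a monomial polyvector to a signed monomial mixed form. Applying $\tilde d$ and then $D_\gamma^{-1}$ removes one derivative.
\begin{itemize}
\item On $A$, the coefficient-bearing generators $dx_i$ and $y_j^*$ produce, from $\pi=\sum C^{i_1,i_2}_{j_1,j_2}z_{i_1}z_{i_2}\partial_{z_{j_1}}\wedge\partial_{z_{j_2}}$, a vector field
$$\upsilon=\sum \pm C^{i_1,i_2}_{j_1,j_2}\,\partial_{z_{j}}(z_{i_1}z_{i_2})\,\partial_{z_{j'}}.$$
This is a linear vector field of the form $\sum_{a,b}M_{ab}z_a\partial_{z_b}$. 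The matrix $M$ is obtained by contracting one upper index of $C$ with one lower index, with signs depending on $\epsilon$.
\item On $A^!$, the same computation for $\pi^!=\sum C^{i_1,i_2}_{j_1,j_2}\partial_{\zeta_{i_1}}\wedge\partial_{\zeta_{i_2}}\zeta_{j_1}\zeta_{j_2}$ gives a linear vector field $\upsilon^!=\sum_{a,b}M^!_{ab}\,\partial_{\zeta_a}\zeta_b$. Here $d\eta_j$ and $\xi_i^*$ play the roles of $dx_i$ and $y_j^*$.
\end{itemize}
Under \eqref{corres}, $z_a\partial_{z_b}\leftrightarrow \partial_{\zeta_a}\zeta_b$. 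So the first claim reduces to the identity $M_{ab}=M^!_{ab}$, once both sides are written in the reordered normal forms used in the proof of Proposition~\ref{prop:diagonal-koszul}.

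\emph{Step 2: sign bookkeeping, the main obstacle.} The identity $M_{ab}=M^!_{ab}$ is where all the work lies. The contraction of a derivative index against a variable index picks up signs from three sources: reordering $\partial_{\zeta_{i_1}}\wedge\partial_{\zeta_{i_2}}$ past $\zeta_{j_1}\zeta_{j_2}$; the parities $\epsilon_i$, which are exchanged between the two sides ($x_i$ even versus $\xi_i$ odd, $y_j$ odd versus $\eta_j$ even); and the position of the removed generator inside $\gamma$ versus $\gamma^!$.

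I will organize the check by the four types of index pairs $(\epsilon_{i},\epsilon_{j})\in\{0,1\}^2$. The admissibility condition $\epsilon_{i_1}+\epsilon_{i_2}=\epsilon_{j_1}+\epsilon_{j_2}$ limits which terms occur, so only a few sign patterns arise. I will then verify that the total sign agrees in each case. As a consistency check, the diagonal case of Proposition~\ref{prop:diagonal-koszul} should give matching Euler-type combinations; the extra minus sign on $c_{is}$ there should be compensated by the sign of $\partial_{\eta}\eta$ relative to $\eta\partial_\eta$.

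\emph{Step 3: diagonalizability.} Because $\upsilon$ is linear, $\overline\upsilon$ preserves polynomial degree. It is therefore diagonalizable on $A$ if and only if it is diagonalizable on the degree-one space $\Sigma^*=\mathrm{span}\{z_a\}$: symmetric powers and quotients of diagonalizable operators are diagonalizable, and conversely the restriction of a diagonalizable operator to an invariant subspace is diagonalizable. The matrix of $\overline\upsilon$ on the $z_a$ is $M$.

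For $\overline{\upsilon^!}$, the derivation $\sum M_{ab}\partial_{\zeta_a}\zeta_b$ acts on the generators $\zeta_b$ by the matrix $M$, up to transpose and a diagonal sign conjugation coming from reordering $\partial_{\zeta_a}\zeta_b$ into $\pm\zeta_b\partial_{\zeta_a}$. Both transposition and conjugation by a diagonal $\pm1$ matrix preserve diagonalizability over $k$. Applying the same degree argument to $A^!$ then gives the equivalence.
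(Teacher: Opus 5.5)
\textbf{Gap in Step~2.} Your plan to compute both modular fields in coordinates could work in principle, but as written it does not prove the first assertion. Step~2, which you say carries all the work, is only announced and never carried out. The identity $M_{ab}=M^!_{ab}$ \emph{is} the statement $\Psi(\upsilon)=\upsilon^!$, and your Step~3 also depends on it.

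The paper needs no sign bookkeeping here. The form-level map $\Phi$, determined by $x_i\mapsto\partial_{\xi_i}$, $dx_i\mapsto\xi_i^*$, $\partial_{y_j}\mapsto\eta_j$ and $y_j^*\mapsto d\eta_j$, satisfies $\Phi\tilde d=\tilde d\Phi$ and $\Phi\tilde\iota_P=\tilde\iota_{\Psi(P)}\Phi$ by \cite[Proposition~3.19]{CY}. By inspection, $\Phi(\gamma)=\gamma^!$. Hence $\Phi D_\gamma=D_{\gamma^!}\Psi$, and so $\Psi\,\mathrm{Div}_\gamma=D_{\gamma^!}^{-1}\Phi\tilde d D_\gamma=\mathrm{Div}_{\gamma^!}\Psi$ on all polyvectors. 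Then $\Psi(\upsilon)=\upsilon^!$ follows from $\Psi(\pi)=\pi^!$ with no case analysis. The identity $\Phi D_\gamma=D_{\gamma^!}\Psi$ is also reused in Theorem~\ref{comdiagram}. Note that under $\Phi$ it is $\xi_i^*$ that plays the role of $dx_i$, and $d\eta_j$ that of $y_j^*$. This is the reverse of what your Step~1 phrasing suggests, and it is exactly the kind of detail your unexecuted bookkeeping would have to get right.

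\textbf{Incorrect justification in Step~3.} The reordering $\partial_{\zeta_a}\zeta_b=(-1)^{\epsilon_a\epsilon_b}\zeta_b\partial_{\zeta_a}$ multiplies the entries of $M$ entrywise by the sign pattern $(-1)^{\epsilon_a\epsilon_b}$, before transposing. This is not conjugation by a diagonal $\pm1$ matrix $\operatorname{diag}(s_a)$, which multiplies entries by $s_as_b$ and hence fixes the diagonal. In fact the twist changes the spectrum: in Example~\ref{ex:nonunimodular-22}, the eigenvalues $-3,-1$ on $y_1,y_2$ become $3,1$ on $\eta_1,\eta_2$. An arbitrary entrywise sign twist of this kind need not preserve diagonalizability. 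For instance, with one even and one odd index, $\begin{pmatrix}1&1\\0&1\end{pmatrix}$ becomes $\begin{pmatrix}1&1\\0&-1\end{pmatrix}$.

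The observation you are missing is that $\overline\upsilon$ has degree $(0,0)$. It therefore preserves $\operatorname{span}\{x_i\}$ and $\operatorname{span}\{y_j\}$ separately, so $M=\operatorname{diag}(M_x,M_y)$. The dual matrix on generators is then $\operatorname{diag}(M_x^{\mathsf T},-M_y^{\mathsf T})$, exactly as in the paper. A block-diagonal matrix is diagonalizable if and only if each block is, and transposition and negation each preserve this. The same degree-$(0,0)$ property is what makes your passage between the generators and the whole algebra legitimate. With this fix, and with Step~2 replaced by the structural argument above, your reduction agrees with the paper's proof.
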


\begin{proof}
Let $\Psi$ denote the isomorphism of polyvector algebras
given by \eqref{corres}, and let $\Phi$ be the corresponding
isomorphism of mixed differential forms, determined by
$$x_i\longmapsto\partial_{\xi_i},\quad
dx_i\longmapsto\xi_i^*,\quad
\partial_{y_j}\longmapsto\eta_j,\quad
y_j^*\longmapsto d\eta_j.$$
By \cite[Proposition~3.19]{CY}, these maps satisfy
$\Phi\tilde d=\tilde d\Phi$ and $
\Phi\tilde\iota_P=\tilde\iota_{\Psi(P)}\Phi.$
Moreover, the chosen volume forms satisfy
$\Phi(\gamma)=\gamma^!$.
Writing $D_\gamma(P)=\tilde\iota_P\gamma$ and
$D_{\gamma^!}(Q)=\tilde\iota_Q\gamma^!$, we therefore obtain
$\Phi D_\gamma=D_{\gamma^!}\Psi$.
It follows that
$$\Psi\mathrm{Div}_{\gamma}
=\Psi D_\gamma^{-1}\tilde dD_\gamma
=D_{\gamma^!}^{-1}\Phi\tilde dD_\gamma
=D_{\gamma^!}^{-1}\tilde dD_{\gamma^!}\Psi
=\mathrm{Div}_{\gamma^!}\Psi.$$
Since $\Psi(\pi)=\pi^!$, this gives
$\Psi(\upsilon)
=-\Psi\mathrm{Div}_{\gamma}(\pi)
=-\mathrm{Div}_{\gamma^!}(\pi^!)
=\upsilon^!.$

Let $M_x$ and $M_y$ denote the matrices of
$\overline\upsilon$ on
$\operatorname{span}_k\{x_1,\ldots,x_m\}$ and
$\operatorname{span}_k\{y_1,\ldots,y_n\}$, respectively.
On the spaces of linear generators, the modular derivations of $A$ and $A^!$ have matrices
$$
\begin{pmatrix} M_x&0\\0&M_y\end{pmatrix}
\qquad\text{and}\qquad
\begin{pmatrix} M_x^{\mathsf T}&0\\0&-M_y^{\mathsf T}\end{pmatrix},
$$
respectively. These matrices are diagonalizable over $k$ simultaneously.

For either algebra, diagonalizability on the space of
linear generators is equivalent to diagonalizability
on the whole algebra. Indeed, a homogeneous eigenbasis
of generators yields a monomial eigenbasis by the
Leibniz rule. Conversely, the restriction of a
diagonalizable operator to an invariant subspace is
diagonalizable.
Hence the derivation of $A$ associated with $\upsilon$ is
diagonalizable if and only if the derivation of
$A^!$ associated with $\upsilon^!$ is.
\end{proof}

\begin{remark}
For the coefficient expressions \eqref{pi1} and
\eqref{pi2}, the modular vector fields are given by
$$
\begin{aligned}
\upsilon
={}&
\sum_{i_1,i_2,j_2}
(-1)^{1+\epsilon_{i_1}
+\epsilon_{i_1}\epsilon_{i_2}}
C^{i_1,i_2}_{i_1,j_2}\,
z_{i_2}\partial_{z_{j_2}}
+
\sum_{i_1,i_2,j_1}
(-1)^{\epsilon_{i_1}}
C^{i_1,i_2}_{j_1,i_1}\,
z_{i_2}\partial_{z_{j_1}}
\\
&+
\sum_{i_1,i_2,j_2}
(-1)^{1+\epsilon_{i_2}}
C^{i_1,i_2}_{i_2,j_2}\,
z_{i_1}\partial_{z_{j_2}}
+
\sum_{i_1,i_2,j_1}
(-1)^{\epsilon_{i_2}
+\epsilon_{i_1}\epsilon_{i_2}}
C^{i_1,i_2}_{j_1,i_2}\,
z_{i_1}\partial_{z_{j_1}},
\end{aligned}
$$
and
$$
\begin{aligned}
\upsilon^!
={}&
\sum_{i_1,i_2,j_2}
(-1)^{1+\epsilon_{i_1}+\epsilon_{i_2}
+\epsilon_{i_1}\epsilon_{i_2}}
C^{i_1,j_2}_{i_1,i_2}\,
\zeta_{i_2}\partial_{\zeta_{j_2}}
+
\sum_{i_1,i_2,j_1}
(-1)^{1+\epsilon_{i_1}+\epsilon_{i_2}}
C^{j_1,i_1}_{i_1,i_2}\,
\zeta_{i_2}\partial_{\zeta_{j_1}}
\\
&+
\sum_{i_1,i_2,j_2}
(-1)^{\epsilon_{i_1}+\epsilon_{i_2}}
C^{i_2,j_2}_{i_1,i_2}\,
\zeta_{i_1}\partial_{\zeta_{j_2}}
+
\sum_{i_1,i_2,j_1}
(-1)^{\epsilon_{i_1}+\epsilon_{i_2}
+\epsilon_{i_1}\epsilon_{i_2}}
C^{j_1,i_2}_{i_1,i_2}\,
\zeta_{i_1}\partial_{\zeta_{j_1}}.
\end{aligned}
$$
Here all indices range from $1$ to $m+n$.
The signs in the correspondence are determined by
$$
\Psi(z_i\partial_{z_j})
=
\partial_{\zeta_i}\zeta_j
=
(-1)^{\epsilon_i\epsilon_j}
\zeta_j\partial_{\zeta_i}.
$$
\end{remark}

\begin{corollary}
Let $A=\mathcal{O}(k^{m||n})$ or
$\mathcal{O}(k^{n\wedge m})$, equipped with the corresponding
mixed volume form chosen above.
If $\pi$ is diagonal with respect to the specified
homogeneous linear coordinates, then its modular vector
field $\upsilon=-\mathrm{Div}_{\gamma}(\pi)$ is
diagonalizable over $k$.
\end{corollary}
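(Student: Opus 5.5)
The approach is to compute the modular vector field of a diagonal $\pi$ explicitly and observe that it is a constant linear combination of the coordinate Euler derivations $E_i=z_i\partial_{z_i}$. Such a derivation acts diagonally on monomials. By Proposition~\ref{prop:diagonal-koszul} and Proposition~\ref{modulardual}, it suffices to treat $A=\mathcal O(k^{m||n})$. Indeed, if $\pi^!$ is diagonal in the dual coordinates, it is the Koszul dual of a diagonal $\pi$ on $A$, and diagonalizability transfers across the correspondence. Alternatively, the same computation below works verbatim on any $\mathcal O(\Sigma)$ with the standard volume form.

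The key step is to compute $\mathrm{Div}_\gamma$ on each summand $c_{ij}\,z_iz_j\,\partial_{z_i}\wedge\partial_{z_j}$. Write this summand as $\pm c_{ij}E_i\wedge E_j$. Each term of $D_\gamma^{-1}\tilde d D_\gamma$ removes one coordinate derivative and applies the corresponding derivative to the coefficient. So $\mathrm{Div}_\gamma(z_iz_j\partial_{z_i}\wedge\partial_{z_j})$ is a signed sum of the form $\pm z_j\partial_{z_j}\pm z_i\partial_{z_i}$. Here $\partial_{z_i}$ hits $z_i$, giving $z_j\partial_{z_j}$, and $\partial_{z_j}$ hits $z_j$, giving $z_i\partial_{z_i}$. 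Cross terms cannot appear, because every surviving term must pair a derivative with the matching variable.

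The cleanest way to see this is to use the operator identity $\mathrm{Div}_\gamma(X\wedge Y)=\mathrm{Div}_\gamma(X)Y\pm X\,\mathrm{Div}_\gamma(Y)\pm[X,Y]$, the standard BV-type relation for a divergence. This relation follows from the mixed Cartan formula (Proposition~\ref{mixedcartan}) and the closedness of $\gamma$, exactly as in the bracket identity of Theorem~\ref{thm:bv-duality} at chain level. Taking $X=E_i$ and $Y=E_j$, we have $[E_i,E_j]=0$, and $\mathrm{Div}_\gamma(E_i)$ is a constant ($\pm1$) because $E_i$ is linear. Hence $\upsilon=\sum_i \mu_i E_i$ for constants $\mu_i\in k$.

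Finally, $\overline\upsilon=\sum_i\mu_iE_i$ multiplies each monomial $z_1^{a_1}\cdots z_l^{a_l}$ by $\sum_i\mu_ia_i$. Here exterior variables have $a_i\in\{0,1\}$, and the Euler derivation of an odd-type variable still acts by the exponent on the nonzero monomials. The monomials form a basis, so $\overline\upsilon$ is diagonalizable. Equivalently, its matrix on the linear generators is diagonal, and the last paragraph of the proof of Proposition~\ref{modulardual} gives diagonalizability on all of $A$.

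The main obstacle is the sign bookkeeping in the divergence computation: confirming that for odd-type coordinates, with $z_j^*$ generators in $\gamma$, the divergence of $E_j$ is still a scalar and produces no non-diagonal terms. I would check this directly on the two-variable cases $x_ix_j$, $y_sy_t$ and $x_iy_s$. Only the scalars $\mu_i$ depend on signs; diagonality of $\upsilon$ does not.
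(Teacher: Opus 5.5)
Your proposal is correct and follows the paper's proof: show that $\mathrm{Div}_\gamma$ of a diagonal bivector on $\mathcal O(k^{m||n})$ is a constant linear combination of the coordinate Euler fields, so $\overline\upsilon$ acts diagonally on the monomial basis, and then handle the dual algebra via Proposition~\ref{prop:diagonal-koszul} and Proposition~\ref{modulardual}. The paper asserts the Euler-field claim without computation; your chain-level identity $\mathrm{Div}_\gamma(X\wedge Y)=\mathrm{Div}_\gamma(X)Y\pm X\,\mathrm{Div}_\gamma(Y)\pm[X,Y]$, which follows from the mixed Cartan formula and $\tilde d\gamma=0$, supplies that step.
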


\begin{proof}
For $A=\mathcal{O}(k^{m||n})$, the divergence of a diagonal
bivector is a linear combination of the coordinate Euler
vector fields. Thus the ordinary derivation corresponding
to $\upsilon$ acts diagonally on the monomial basis of $A$.
The dual case follows from
Proposition~\ref{modulardual} and the preservation of
diagonal Poisson structures under \eqref{corres}.
\end{proof}

\subsection{Batalin--Vilkovisky structures under Koszul duality}

The modular compatibility above identifies the twisted chain complexes under the Koszul correspondence. We now compare the Poincar\'e duality maps and prove that the calculus isomorphism intertwines the operators $B$ obtained from the zero-eigenvalue subcomplexes. It follows that the BV operators correspond in the non-unimodular case as well. 
For the unimodular quadratic polynomial case, see also
\cite[Theorem~1.3]{chen2021poisson}.

\begin{theorem}\label{comdiagram}
    Let $A=(\mathcal{O}(k^{m||n}),\pi)$ be a quadratic Poisson algebra with $|\pi|=(0,-2)$, and let $A^!=(\mathcal{O}(k^{n\wedge m}),\pi^!)$ be its Koszul dual Poisson algebra.
    Their modular vector fields are denoted by $\upsilon$ and $\upsilon^!$, respectively.
    Then the following diagram commutes:
\begin{equation}\label{diagPoisson}
\xymatrixcolsep{4pc}
\xymatrix{
\mathrm{HP}^{\bullet,\bullet}(A)
\ar[r]^-{\cong}
\ar[d]_{\cong}
&
\widetilde{\mathrm{HP}}_{-n-\bullet,m-\bullet}
(A;\upsilon)
\ar[d]_{\cong}
\\
\mathrm{HP}^{\bullet,\bullet}(A^!)
\ar[r]^-{\cong}
&
\widetilde{\mathrm{HP}}_{-n-\bullet,m-\bullet}
(A^!;\upsilon^!).
}
\end{equation}
\end{theorem}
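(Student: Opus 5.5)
The plan is to prove commutativity already at the cochain level, using the maps $\Psi$ on polyvectors and $\Phi$ on mixed forms from the proof of Proposition~\ref{modulardual}. The horizontal arrows are induced by the chain isomorphisms $F(P)=(-1)^{t(t-1)/2}D_\gamma(P)$ and $F^!(Q)=(-1)^{t(t-1)/2}D_{\gamma^!}(Q)$ of Theorem~\ref{horiso}. The left vertical arrow is induced by $\Psi$, as in \cite[Theorem~5.5]{CY}. The right vertical arrow will be induced by $\Phi$, after I check that $\Phi$ is a chain map between the \emph{twisted} complexes. Then the square reduces to the identity $\Phi D_\gamma=D_{\gamma^!}\Psi$ on chains, which was established in the proof of Proposition~\ref{modulardual}.

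The steps, in order, are as follows.

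First, I record the identities from \cite[Proposition~3.19]{CY}:
\[
\Phi\tilde d=\tilde d\Phi,\qquad \Phi\tilde\iota_P=\tilde\iota_{\Psi(P)}\Phi,\qquad \Psi(\pi)=\pi^!,\qquad \Phi(\gamma)=\gamma^! .
\]

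Second, from these I get $\Phi\partial_\pi=\Phi[\tilde\iota_\pi,\tilde d]=[\tilde\iota_{\pi^!},\tilde d]\Phi=\partial_{\pi^!}\Phi$. Since $\Psi(\upsilon)=\upsilon^!$ by Proposition~\ref{modulardual}, I also get $\Phi\tilde\iota_\upsilon=\tilde\iota_{\upsilon^!}\Phi$. Together these give $\Phi\partial_\upsilon=\partial_{\upsilon^!}\Phi$, so $\Phi$ is an isomorphism of twisted complexes and induces the right vertical isomorphism.

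Third, I check that $\Psi$ and $\Phi$ preserve bidegree with the same shift. Under the correspondence, $\mathrm{CP}^{s,t}(A)$ goes to $\mathrm{CP}^{s,t}(A^!)$, and $\widetilde\Omega^{u-s,v-t}(A)$ goes to $\widetilde\Omega^{u-s,v-t}(A^!)$, with $(u,v)=(-n,m)$ for both volume forms. This must hold because $\Phi(\gamma)=\gamma^!$ and $\Phi$ is bidegree-preserving, and that in turn follows from the generator assignments, which match degrees. Granting this, the sign factors $(-1)^{t(t-1)/2}$ in $F$ and $F^!$ agree, so $\Phi F=F^!\Psi$ holds on cochains.

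Fourth, passing to (co)homology gives the commutative square \eqref{diagPoisson}.

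The main obstacle is the degree bookkeeping in the third step. I must confirm that $\Psi$ preserves the bidegree $(-s,-t)$ exactly rather than up to a shift. If it does, the $t$-dependent signs cancel; if it only preserves the weight up to a shift, the signs must be recomputed. I would verify this on generators: $|x_i|=(0,0)$ and $|\partial_{\xi_i}|=-|\xi_i|+(0,-1)=(0,0)$, and similarly $|\partial_{x_i}|=(0,-1)=|\xi_i|$. I would do the same check for the $y,\eta$ generators and for $dx_i\mapsto\xi_i^*$ and $y_j^*\mapsto d\eta_j$. This confirms that both the polyvector and form correspondences are bidegree-preserving algebra isomorphisms. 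I would also double-check that $\Phi\tilde\iota_P=\tilde\iota_{\Psi(P)}\Phi$ holds with no extra sign for odd-weight $P$ such as $\upsilon$, because \cite[Proposition~3.19]{CY} was used in the unimodular setting only for $\pi$.
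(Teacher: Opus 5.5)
Your proposal is correct and is essentially the paper's proof. In both, $\Phi$ intertwines $\partial_\upsilon$ and $\partial_{\upsilon^!}$ by way of $\Phi\tilde d=\tilde d\Phi$, $\Phi\tilde\iota_P=\tilde\iota_{\Psi(P)}\Phi$ and $\Psi(\upsilon)=\upsilon^!$, while $\Psi$ intertwines $\delta^\pi$ and $\delta^{\pi^!}$, and the square commutes on representatives because $\Phi D_\gamma=D_{\gamma^!}\Psi$. Your explicit bidegree check, which gives $(u,v)=(-n,m)$ on both sides so that the signs $(-1)^{t(t-1)/2}$ in $F$ and $F^!$ agree, spells out what the paper compresses into the remark that $\Psi$ and $\Phi$ preserve degrees.
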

\begin{proof}
Recall the polyvector and mixed form isomorphisms $\Psi$ and $\Phi$ from the proof of Proposition~\ref{modulardual}:
\begin{align}\label{correspondence}
\Psi:&\quad x_i\mapsto\partial_{\xi_i},\quad
 y_j\mapsto\partial_{\eta_j},\quad
 \partial_{x_i}\mapsto\xi_i,\quad\partial_{y_j}\mapsto\eta_j,\\
\Phi:&\quad x_i\mapsto\partial_{\xi_i},\quad
 y_j^*\mapsto d\eta_j,\quad dx_i\mapsto\xi_i^*,\quad
 \partial_{y_j}\mapsto\eta_j.\label{correspondence2}
\end{align}
They preserve degrees. The identities
$\Psi[\pi,P]=[\pi^!,\Psi(P)]$ and
$\Phi\tilde\iota_P=\tilde\iota_{\Psi(P)}\Phi$, together
with $\Phi\tilde d=\tilde d\Phi$ and
$\Psi(\upsilon)=\upsilon^!$, give
$$
\Psi\delta^\pi=\delta^{\pi^!}\Psi,\qquad
\Phi\partial_\upsilon=\partial_{\upsilon^!}\Phi.
$$
Thus $\Psi$ and $\Phi$ induce the left and right vertical
isomorphisms in \eqref{diagPoisson}, respectively.
The horizontal maps are induced by volume contraction,
as in Theorem~\ref{horiso}. Finally, the identity
$$
\Phi D_\gamma=D_{\gamma^!}\Psi,
$$
proved in Proposition~\ref{modulardual}, shows that the square commutes on representatives and hence on (co)homology. This argument applies without a diagonalizability hypothesis.
\end{proof}

Theorem~\ref{comdiagram} establishes compatibility with volume duality.
To obtain a BV isomorphism, it remains to compare
the operator $B$ on twisted Poisson homology. The next theorem
shows that the calculus correspondence respects $B$ and the
other calculus operations.
\begin{theorem}\label{isodiff}
Under the hypotheses of Theorem~\ref{comdiagram},
assume that the modular derivation of $A$ is
diagonalizable over $k$.
Then $(\Psi_*,\Phi_*)$ is an isomorphism between
the differential calculi with duality
$$
\bigl(
\mathrm{HP}^{\bullet,\bullet}(A),
\widetilde{\mathrm{HP}}_{\bullet,\bullet}(A;\upsilon),
\wedge,\tilde\iota,[-,-],\tilde d,[\gamma]
\bigr),
\bigl(
\mathrm{HP}^{\bullet,\bullet}(A^!),
\widetilde{\mathrm{HP}}_{\bullet,\bullet}
(A^!;\upsilon^!),
\wedge,\tilde\iota,[-,-],\tilde d,[\gamma^!]
\bigr).
$$
\end{theorem}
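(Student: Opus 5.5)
The plan is to check each structure in the calculus with duality separately, working at the chain level with the isomorphisms $\Psi$ and $\Phi$ from the proof of Proposition~\ref{modulardual}. First, $\Psi$ is an isomorphism of polyvector algebras. It preserves the wedge product by construction. It intertwines the Schouten bracket, since $\Psi[\pi,P]=[\pi^!,\Psi(P)]$ is part of the calculus isomorphism of \cite[Theorem~5.5]{CY}, which in fact gives $\Psi[P,Q]=[\Psi P,\Psi Q]$ for all $P,Q$. Together with $\Psi\delta^\pi=\delta^{\pi^!}\Psi$ from Theorem~\ref{comdiagram}, this shows that $\Psi_*$ is an isomorphism of Gerstenhaber algebras $\mathrm{HP}^{\bullet,\bullet}(A)\to\mathrm{HP}^{\bullet,\bullet}(A^!)$. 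By Theorem~\ref{comdiagram}, $\Phi$ is a chain isomorphism for the twisted boundaries, so $\Phi_*$ is an isomorphism of twisted Poisson homologies. The identity $\Phi\tilde\iota_P=\tilde\iota_{\Psi(P)}\Phi$ then shows that $(\Psi_*,\Phi_*)$ respects the module structure $\tilde\iota$. Since $\Phi(\gamma)=\gamma^!$, the fundamental class goes to the fundamental class, and Theorem~\ref{comdiagram} already gives compatibility with $\mathrm{PD}$.

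The remaining and main step is compatibility with the transported operator $B$. This operator is not $\tilde d$ on the full twisted homology; it is defined through the zero-eigenvalue subcomplexes. So the key claim is that $\Phi$ maps $\widetilde{\mathrm{CP}}^\lambda_{\bullet,\bullet}(A;\upsilon)$ onto $\widetilde{\mathrm{CP}}^\lambda_{\bullet,\bullet}(A^!;\upsilon^!)$ for each $\lambda$. To prove it, use $\Phi\tilde d=\tilde d\Phi$, $\Phi\tilde\iota_\upsilon=\tilde\iota_{\Psi(\upsilon)}\Phi=\tilde\iota_{\upsilon^!}\Phi$ and Proposition~\ref{modulardual}. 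These give
$$\Phi L_\upsilon=\Phi[\tilde\iota_\upsilon,\tilde d]=[\tilde\iota_{\upsilon^!},\tilde d]\Phi=L_{\upsilon^!}\Phi,$$
so $\Phi$ preserves eigenvalues. By Proposition~\ref{modulardual}, the modular derivation of $A^!$ is also diagonalizable, so Theorem~\ref{diagonalizable} applies on both sides. The eigenspace decompositions are algebraic direct sums, and $\Phi$ intertwines the two $L$-operators, so $\Phi$ commutes with the spectral projections: $\Phi\iota_0=\iota_0^!\Phi|$ and $p_0^!\Phi=\Phi p_0$.

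Then, on homology,
$$\Phi_*B=\Phi_*(\iota_0)_*(\tilde d|_0)_*(p_0)_*=(\iota^!_0)_*(\tilde d|_0)_*(p^!_0)_*\Phi_*=B^!\Phi_*,$$
using $\Phi\tilde d=\tilde d\Phi$ on the zero-eigenvalue subcomplexes. The same projection argument gives compatibility with the operations on the zero-eigenvalue level, where the calculus of Corollary~\ref{diffcaldual} is realized. On the cochain side, the analogous statement $\Psi[\upsilon,P]=[\upsilon^!,\Psi P]$ shows that $\Psi$ preserves the cochain eigenspaces.

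The point to watch is that the correspondence has no hidden sign discrepancy in $\Phi\tilde\iota_\upsilon=\tilde\iota_{\upsilon^!}\Phi$. Such a sign would flip eigenvalues and break the matching of zero-eigenspaces. This is settled by \cite[Proposition~3.19]{CY} together with $\Psi(\upsilon)=\upsilon^!$, both already established. Collecting these compatibilities shows that $(\Psi_*,\Phi_*)$ is an isomorphism of differential calculi with duality.
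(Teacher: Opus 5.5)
Your proposal is correct and takes essentially the same route as the paper. Both arguments rest on the chain-level compatibilities of $\Psi$ and $\Phi$ with wedge, bracket, contraction and $\tilde d$, descend them via Theorem~\ref{comdiagram}, use $\Phi(\gamma)=\gamma^!$ for the fundamental class, and use $\Phi L_\upsilon=L_{\upsilon^!}\Phi$ to show that $\Phi$ commutes with the zero-eigenvalue inclusions and projections, hence with the transported operator $B$. The only difference is that the paper cites \cite[Proposition~3.19]{CY} rather than \cite[Theorem~5.5]{CY} for the bracket compatibility, which is immaterial.
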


\begin{proof}
By Proposition~\ref{modulardual}, the modular
derivation of $A^!$ is also diagonalizable.
Corollary~\ref{diffcaldual} therefore supplies both
differential calculi with duality.

By \cite[Proposition~3.19]{CY}, the maps $\Psi$ and
$\Phi$ satisfy
$\Psi(P\wedge Q)=\Psi(P)\wedge\Psi(Q)$,
$\Psi[P,Q]=[\Psi(P),\Psi(Q)]$,
and
$\Phi\tilde\iota_P=\tilde\iota_{\Psi(P)}\Phi$,
$\Phi\tilde d=\tilde d\Phi$.
Theorem~\ref{comdiagram} shows that these maps
intertwine the Poisson differentials. Hence the
product, bracket and contraction identities descend
to (co)homology.

Moreover, $\Psi(\upsilon)=\upsilon^!$ gives
$\Phi L_\upsilon=L_{\upsilon^!}\Phi$.
Thus $\Phi$ preserves eigenspaces and intertwines the
zero-eigenvalue inclusions and projections.
Consequently,
$\Phi_*\tilde d=\tilde d\Phi_*$,
where $\tilde d$ on twisted Poisson homology denotes
the transported operator defined after
Theorem~\ref{diagonalizable}.

Finally, $\Phi(\gamma)=\gamma^!$, so $\Phi_*$
preserves the fundamental classes. This proves
compatibility with all the calculus operations
and with the duality.
\end{proof}

\begin{proof}[Proof of Theorem~\ref{mainthm}]
By Proposition~\ref{modulardual}, the dual modular derivation is diagonalizable. Theorem~\ref{mainthm1.1} therefore gives BV structures on both Poisson cohomologies. By Theorems~\ref{comdiagram} and~\ref{isodiff}, the induced maps satisfy $\Phi_*\mathrm{PD}=\mathrm{PD}^!\Psi_*$ and $\Phi_*B=B^!\Phi_*$. Hence
$$
\Psi_*\Delta
=-\Psi_*\mathrm{PD}^{-1}B\mathrm{PD}
=-(\mathrm{PD}^!)^{-1}B^!\mathrm{PD}^!\Psi_*
=\Delta^!\Psi_*.
$$
The map $\Psi_*$ also preserves the product and bracket,
and hence is an isomorphism of BV algebras.
\end{proof}

An analogous compatibility result for Hochschild BV structures is given in \cite[Theorem~1.1]{liu2019koszul}.

\section{Examples}\label{sect:examples}

We begin with non-unimodular diagonal quadratic Poisson structures on $k^{2||2}$ and $k^{1||3}$. We compute their diagonalizable modular derivations and the BV operators on coordinate Euler classes, then compare the operators under Koszul duality. The final two examples consider a non-diagonal structure with diagonalizable modular derivation and a structure for which diagonalizability fails.

By Proposition~\ref{prop:canonical-volume}, a Poisson structure is non-unimodular if and only if its modular vector field is nonzero. In the first two examples, we also verify directly that the modular class is nonzero.

\begin{example}[A non-unimodular structure on $k^{2||2}$]\label{ex:nonunimodular-22}
Consider $A=\mathcal O(k^{2||2})$, with coordinates
$x_1,x_2,y_1,y_2$, and let
$$
\begin{aligned}
\pi=
x_1x_2\partial_{x_1}\wedge\partial_{x_2}
-y_1y_2\partial_{y_1}\wedge\partial_{y_2}
+\sum_{i,j=1}^{2}
x_i y_j\partial_{x_i}\wedge\partial_{y_j}.
\end{aligned}
$$
The bivector $\pi$ is diagonal and quadratic of degree $(0,-2)$. It is a linear combination of polyvector products of coordinate Euler derivations, which commute pairwise. Hence $[\pi,\pi]=0$.

For the mixed volume form
$\gamma=dx_1\wedge dx_2\wedge y_1^*\wedge y_2^*$,
direct mixed contraction gives
$$
\upsilon=-\mathrm{Div}_{\gamma}(\pi)
=-x_1\partial_{x_1}
-3x_2\partial_{x_2}
-3y_1\partial_{y_1}
-y_2\partial_{y_2}.
$$
In particular,
$\upsilon
(x_1^a x_2^b y_1^{\varepsilon_1}y_2^{\varepsilon_2})=
-(a+3b+3\varepsilon_1+\varepsilon_2)
x_1^a x_2^b y_1^{\varepsilon_1}y_2^{\varepsilon_2}$,
where $\varepsilon_1,\varepsilon_2\in\{0,1\}$.
Thus the modular derivation is nonzero and diagonalizable. The structure $\pi$ is therefore non-unimodular, and Theorem~\ref{mainthm1.1} gives a BV structure on $\mathrm{HP}^{\bullet,\bullet}(A)$.

The coordinate Euler fields commute with $\pi$ and $\upsilon$.
Since
$\mathrm{Div}_\gamma(x_i\partial_{x_i})=1$ and
$\mathrm{Div}_\gamma(y_j\partial_{y_j})=-1$,
$\Delta[x_i\partial_{x_i}]=-[1]$,
$\Delta[y_j\partial_{y_j}]=[1]$.
Since the Euler fields commute, \eqref{eq:second-order} shows that $\Delta$ restricts to a graded derivation on the subalgebra generated by their classes.
In particular,
\begin{align*}
\Delta[x_1x_2\partial_{x_1}\wedge\partial_{x_2}]
&=[x_1\partial_{x_1}]-[x_2\partial_{x_2}],\\
\Delta[y_1y_2\partial_{y_1}\wedge\partial_{y_2}]
&=[y_1\partial_{y_1}]-[y_2\partial_{y_2}],\\
\Delta[x_i y_j\partial_{x_i}\wedge\partial_{y_j}]
&=-[x_i\partial_{x_i}]-[y_j\partial_{y_j}].
\end{align*}
To see that $[\upsilon]\ne0$, assign polynomial degree one
to each coordinate. Since $\pi$ has quadratic coefficients,
the coefficients of $[f,\pi]$ have no terms of degree less
than two for any $f\in A$. 
Thus $\upsilon$, a nonzero vector field with linear
coefficients, is not of the form $[f,\pi]$ for any $f\in A$.
Since the Poisson differential increases polyvector length
by one, this excludes all possible coboundaries for $\upsilon$.
By Remark~\ref{rem:modular-class-bv}, we therefore obtain
$\Delta[\pi]=[\upsilon]\ne0$.

On $A^!=\mathcal O(k^{2\wedge2})$, the dual Poisson bivector  is
$$
\pi^!=\xi_1\xi_2\partial_{\xi_1}\wedge\partial_{\xi_2}
-\eta_1\eta_2\partial_{\eta_1}\wedge\partial_{\eta_2}
-\sum_{i,j=1}^{2}\xi_i\eta_j
\partial_{\xi_i}\wedge\partial_{\eta_j}.
$$
For $\gamma^!=\xi_1^*\wedge\xi_2^*\wedge d\eta_1\wedge d\eta_2$,
$\upsilon^!=-\xi_1\partial_{\xi_1}-3\xi_2\partial_{\xi_2}
+3\eta_1\partial_{\eta_1}+\eta_2\partial_{\eta_2}$.
The dual modular derivation is again nonzero and diagonalizable.
Under the correspondence
$x_i\partial_{x_i}\mapsto\xi_i\partial_{\xi_i}$,
$y_j\partial_{y_j}\mapsto-\eta_j\partial_{\eta_j}$,
Theorem~\ref{mainthm} identifies the BV operators.
\end{example}

\begin{example}[A non-unimodular structure on $k^{1||3}$]\label{ex:nonunimodular-13}
Consider $A=\mathcal O(k^{1||3})$, with coordinates
$x,y_1,y_2,y_3$ of degrees
$|x|=(0,0)$ and $|y_j|=(1,0)$.
Let
$$
\pi=
\sum_{j=1}^{3}xy_j\partial_x\wedge\partial_{y_j}
-\sum_{1\leq i<j\leq3}
y_i y_j\partial_{y_i}\wedge\partial_{y_j}.
$$
As in the preceding example, this is a diagonal quadratic
Poisson structure of degree $(0,-2)$.

For
$\gamma=dx\wedge y_1^*\wedge y_2^*\wedge y_3^*$,
the modular vector field is
$$
\upsilon=-\mathrm{Div}_{\gamma}(\pi)
=-3x\partial_x
-3y_1\partial_{y_1}
-y_2\partial_{y_2}
+y_3\partial_{y_3}.
$$
Its action on monomials is
$\upsilon
(x^a y_1^{\varepsilon_1}
y_2^{\varepsilon_2}y_3^{\varepsilon_3})
=
(-3a-3\varepsilon_1-\varepsilon_2+\varepsilon_3)
x^a y_1^{\varepsilon_1}
y_2^{\varepsilon_2}y_3^{\varepsilon_3},$
where $\varepsilon_j\in\{0,1\}$.
Hence the modular derivation is diagonalizable. Since $\upsilon\ne0$, Proposition~\ref{prop:canonical-volume} excludes a mixed volume form that is a Poisson cycle, so $\pi$ is non-unimodular.
Its modular class is nonzero by the polynomial degree
argument in Example~\ref{ex:nonunimodular-22}.
Thus Theorem~\ref{mainthm1.1} applies here as well.

On the subalgebra generated by the coordinate Euler classes,
$\Delta$ is the graded derivation determined by
$\Delta[x\partial_x]=-[1]$,
$\Delta[y_j\partial_{y_j}]=[1]$ for $j=1,2,3$.
Consequently,
\begin{align*}
\Delta[xy_j\partial_x\wedge\partial_{y_j}]
=-[x\partial_x]-[y_j\partial_{y_j}],\quad
\Delta[y_i y_j\partial_{y_i}\wedge\partial_{y_j}]
=[y_i\partial_{y_i}]-[y_j\partial_{y_j}]
\quad(i<j).
\end{align*}
Thus
$\Delta[\pi]=-3[x\partial_x]-3[y_1\partial_{y_1}]
-[y_2\partial_{y_2}]+[y_3\partial_{y_3}]
=[\upsilon]\ne0$.

On $A^!=\mathcal O(k^{3\wedge1})$, the dual structure is
$$
\pi^!=-\sum_{j=1}^{3}\xi\eta_j
\partial_\xi\wedge\partial_{\eta_j}
-\sum_{1\leq i<j\leq3}\eta_i\eta_j
\partial_{\eta_i}\wedge\partial_{\eta_j}.
$$
For $\gamma^!=\xi^*\wedge d\eta_1\wedge d\eta_2\wedge d\eta_3$,
$\upsilon^!=-3\xi\partial_\xi
+3\eta_1\partial_{\eta_1}
+\eta_2\partial_{\eta_2}-\eta_3\partial_{\eta_3}$.
The resulting modular derivation of $A^!$ is nonzero and diagonalizable.
The correspondence
$x\partial_x\mapsto\xi\partial_\xi$,
$y_j\partial_{y_j}\mapsto-\eta_j\partial_{\eta_j}$
identifies the BV operators by Theorem~\ref{mainthm}.
\end{example}

In each of the two examples, the modular vector field is nonzero for every mixed volume form, while its diagonalizability permits the BV construction. Koszul duality identifies the resulting BV algebra with that of the corresponding dual Poisson algebra, extending the unimodular comparison of \cite{CY} as in Theorem~\ref{mainthm}.

The next example has a diagonalizable modular derivation although its Poisson structure is not diagonal in any homogeneous linear coordinates.

\begin{example}[A non-diagonal structure]\label{ex:nondiagonal}
On $k^{3||0}$, consider the Poisson structure
$$
\pi=x_1^2\partial_{x_2}\wedge\partial_{x_3}
+x_1x_2\partial_{x_1}\wedge\partial_{x_2}
-x_1x_3\partial_{x_1}\wedge\partial_{x_3}.
$$
For the standard volume,
$$
\upsilon=-x_2\partial_{x_2}+x_3\partial_{x_3},\qquad
\upsilon(x_1^a x_2^b x_3^c)=(c-b)x_1^a x_2^b x_3^c.
$$
Hence the modular derivation is diagonalizable and nonzero.
The zero locus of $\pi$ is the hyperplane $x_1=0$.
In contrast, a diagonal bivector vanishes on every
coordinate axis, so its zero locus spans the whole
ambient space. The dimension of the linear span of the zero locus is invariant under invertible linear coordinate changes. Hence $\pi$ cannot be diagonal in any linear coordinates.
Theorem~\ref{mainthm} applies to this non-unimodular
structure and its Koszul dual.
\end{example}

\begin{example}[Failure of diagonalizability]\label{ex:nilpotent-modular}
On $k^{2||0}$, let $\pi=x_1^2\partial_{x_1}\wedge\partial_{x_2}$.
It is Poisson because the space has dimension two.
For $\gamma=dx_1\wedge dx_2$, our contraction order gives
$D_\gamma(\pi)=-x_1^2$ and $\upsilon=-2x_1\partial_{x_2}$.
On $\operatorname{span}\{x_1,x_2\}$, the modular derivation is a nonzero nilpotent operator. Hence the modular derivation is not diagonalizable. Proposition~\ref{modulardual} implies the same conclusion for the dual modular derivation.
\end{example}

\begin{remark}\label{rem:homogeneous-scope}
On $\mathcal O(k^{m||n})$, suppose that the coefficients
of $\pi$ are homogeneous of polynomial degree $d$.
Then its modular derivation shifts polynomial degree by
$d-2$. If $d>2$, the lowest nonzero homogeneous term of
an eigenvector rules out a nonzero eigenvalue; if $d<2$,
the highest nonzero term does so. Thus, when $d\ne2$,
all eigenvalues are zero, and diagonalizability forces
$\upsilon=0$. 
Among Poisson structures homogeneous in polynomial degree, the non-unimodular case of Theorem~\ref{mainthm1.1} is therefore quadratic. The theorem also applies to inhomogeneous Poisson structures of degree $(0,-2)$.
\end{remark}

\begin{ack}
We are grateful to Xiaojun Chen for his guidance and to Zhuyang Li for the discussions and exchanges throughout this work.
This work was supported by the Zhejiang Provincial Natural Science Foundation (Grant No. LQ24A010020) and the  National Natural Science Foundation of China (Grant Nos. 12271377 and 12261131498).
\end{ack}

\end{document}